\documentclass[12pt,a4paper]{article}

\usepackage[font=small,margin=2cm]{caption}

\usepackage{authblk}
\usepackage[margin=2.3cm]{geometry}
\usepackage{t1enc}
\usepackage[utf8]{inputenc}
\usepackage{amsmath,amsthm,amssymb}
\usepackage{graphicx}
\usepackage{enumerate}
\usepackage{hyperref}
\usepackage{bm}
\usepackage{comment}
\usepackage{amsfonts}
\usepackage{graphicx,caption}
\usepackage{bm}
\usepackage{amsmath, amsthm, amssymb}
\usepackage{graphicx}
\usepackage{hyperref}
\usepackage{relsize}
\usepackage{blkarray}
\usepackage{tikz}
\usetikzlibrary{decorations.pathreplacing}
\usepackage{tabstackengine}

\stackMath

\usepackage{subfigure}
\usepackage[table]{xcolor}

\usepackage{algpseudocode}

\usepackage{bbm}

\theoremstyle{plain}
\usepackage{amsthm}
\makeatletter
\newcommand{\newreptheorem}[2]{\newtheorem*{rep@#1}{\rep@title}\newenvironment{rep#1}[1]{\def\rep@title{#2 \ref*{##1}}\begin{rep@#1}}{\end{rep@#1}}}
\makeatother

\newtheorem{theorem}{Theorem}
\newtheorem*{theorem-non}{Theorem}
\newtheorem*{non-lemma}{Lemma}
\newtheorem{lemma}[theorem]{Lemma}
\newreptheorem{lemma}{Lemma}

\theoremstyle{definition}

\newcommand{\varsubsetequp}{\mathrel{\rotatebox[origin=c]{90}{$\subseteq$}}}

\newcommand{\Finf}{\mathbb{F}_q^{\mathbb{Z}_+}}

\DeclareMathOperator{\RowS}{RowSpace}

\DeclareMathOperator{\inv}{inv}

\DeclareMathOperator{\id}{id}

\begin{document}

\title{Persistence diagrams of random triangular matrices over finite fields}
\author{Andr\'as M\'esz\'aros}
%}}
\date{}
\affil{HUN-REN Alfr\'ed R\'enyi Institute of Mathematics,\\ Budapest, Hungary}
\maketitle
\begin{abstract}
Let us consider a random infinite lower triangular matrix, where the entries on and below the diagonal are i.i.d. uniform random elements of a fixed finite field. We investigate the evolution of the span of the first $n$ rows of this matrix as $n$ grows. Many properties of this evolving subspace can be captured with the help of the verbose persistence diagram, which is a standard tool in stochastic topology and topological data analysis. We give an explicit formula for the distribution of the persistence diagram. We prove a law of large numbers for the distribution of lifetimes. We also describe the fluctuations of the persistent Betti numbers.

\end{abstract}

\section{Introduction}

Fix a prime power $q$. Let
\[(L(i,j))_{1\le j\le i,\quad i,j\in\mathbb{Z}_+}\]
be i.i.d. uniform random element of the finite field $\mathbb{F}_q$. We set
\[L(i,j)=0\text{ for all positive integers $1\le i<j$.}\]

We think of $L=(L(i,j))_{i,j\in \mathbb{Z}_+}$ as an infinite random lower triangular matrix. 

Let $L_n=(L_{i,j})_{1\le i,j\le n}$ be the top left $n\times n$ submatrix of $L$. We see that $L_n$ is a uniform random $n\times n$ lower triangular matrix over $\mathbb{F}_q$. If one wants to understand the evolution of the coranks
\[\dim \ker L_n=\dim \frac{\mathbb{F}_q^n}{\RowS(L_n)}\]
as $n$ grows, then it is convenient to identify the sequence of spaces $\left(\RowS(L_n)\right)_n$ with a sequence of growing subspaces inside the space $\mathbb{F}_q^{\mathbb{Z}_+}$ as follows.\footnote{Note that we use the definition of~$\mathbb{F}_q^{\mathbb{Z}_+}$, where the element of $\mathbb{F}_q^{\mathbb{Z}_+}$ are only allowed to have finitely many nonzero components.} Let
\[Z_n=\{v\in \mathbb{F}_q^{\mathbb{Z}_+}\,:\,v_i=0\text{ for all }i>n\}.\]
Given a vector $u\in \mathbb{Z}^n$, let $\overline{u}\in \mathbb{F}_q^{\mathbb{Z}_+}$ be obtained from $u$ by appending it with zeros. Relying on the embedding $u\mapsto \overline{u}$, we can identify $\RowS(L_n)$ with the subspace $B_n$ of $\Finf$ defined by
\[B_n=\{\overline{u}\,:\,u\in \RowS(L_n)\}=\{vL\,:\,v\in Z_n\}.\]

The following containments hold:
\begin{equation}\label{eqcontainment}\begin{matrix}
Z_1&\subseteq&Z_2&\subseteq &Z_3&\subseteq&\cdots\\
\varsubsetequp& &\varsubsetequp& &\varsubsetequp& &\cdots\\
B_1&\subseteq&B_2&\subseteq &B_3&\subseteq&\cdots\\
\end{matrix}
\end{equation}

Relying on these definitions, we have
\[\dim\ker L_n=\dim \frac{Z_n}{B_n}.\]

Roughly speaking, $\dim \ker L_n$ measures the number of vectors in $Z_n\setminus B_n$. Motivated by this, given a nonzero vector $v\in \Finf$, we define its birth time $b(v)$ and death time $d(v)$ as
\begin{align*}
 b(v)&=\min \{i\,:\,v\in Z_i\},\\
 d(v)&=\min \{i\,:\,v\in B_i\}.
\end{align*}
Note that $v\in Z_n\setminus B_n$ if and only if $b(v)\le n<d(v)$. Therefore, heuristically a vector $v$ contributes to $\dim \ker L_n$ if and only $b(v)\le n<d(v)$. More precisely,
\[\dim \ker L_n=\dim \frac{Z_n}{B_n}=\dim\{v\in \Finf\,:\,b(v)\le n\}-\dim\{v\in \Finf\,:\,d(v)\le n\}.\]
More intuitively, $\dim \ker L_n$ “counts” the number of vectors which were born no later than time $n$ and died after time $n$.

The notion of (verbose) \textbf{persistence diagrams} provides us a way to record the birth and death times of vectors. In particular, $\dim\ker L_n=\dim \frac{Z_n}{B_n}$ can be recovered from the (verbose) persistence diagram. Besides the dimensions of the quotient spaces $\frac{Z_n}{B_n}$, the persistence diagram captures the relationship of these spaces. More precisely, the sequence of quotient spaces $\frac{Z_n}{B_n}$ can be equipped with an additional algebraic structure giving us the so-called \textbf{persistence module}. The persistence diagram describes this persistence module by providing a decomposition into direct sums of interval modules. See Section~\ref{SecModule} for more details.

We give the definition of verbose persistence diagrams in Lemma~\ref{lemmadef} below. Although most of the statements of this lemma can be found in the literature, we decided to provide a proof, because the specific algorithmic proof we present here will be also used to analyze the behavior of the verbose persistence diagram. Persistence diagrams are often discussed in the setting of the persistent homology of simplicial complexes growing over time. Among the most natural examples of such growing complexes are the \v{C}ech complexes built over point sets in $\mathbb{R}^m$. The notion of persistence diagrams is more intuitive in this geometric/topological setting. We discuss this setting in Section~\ref{SecCech}. The definitions given in Lemma~\ref{lemmadef} are just straightforward analogues of the notions presented in Section~\ref{SecCech}. 

\begin{lemma}\label{lemmadef}
 With probability $1$, we can find a subset $\mathcal{P}$ of
\[\overline{\Delta}_{\mathbb{Z}}=\{(b,d)\in\mathbb{Z}_+^2\,:\,b\le d\}\]
 and a basis $(v_p)_{p\in \mathcal{P}}$ of $\Finf$ indexed by the points of $\mathcal{P}$ with the following properties (the choice of $\mathcal{P}$ and the basis may depend on $L$):
 \begin{enumerate}[(1)]
 \item \label{lemmadedpart1} For any point $p=(b,d)\in\mathcal{P}$, we have $b(v_p)=b$ and $d(v_p)=d$.
 \newpage
 \item \label{lemmadedpart2} \hfill
 \begin{enumerate}[(a)]
 \item \label{lemmadedpart2a} For all $r,s\in \mathbb{Z}_+$, the vectors $\{v_p\,:\,p\in \mathcal{P}\cap ([1,r]\times [1,s])\}$ form a basis of $Z_r\cap B_s$.
 \item \label{lemmadedpart2b} For all $r\in \mathbb{Z}_+$, the vectors $\{v_p\,:\,p\in \mathcal{P}\cap ([1,r]\times [1,\infty))\}$ form a basis of $Z_r$.
 \end{enumerate}
 \item \label{lemmadedpart3} \hfill
 \begin{enumerate}[(a)]
 \item \label{lemmadedpart3a} For all $r,s\in \mathbb{Z}_+$, we have
\[\dim Z_r\cap B_s=|\mathcal{P}\cap ([1,r]\times [1,s])|.\]
 \item \label{lemmadedpart3b} For all $r\in \mathbb{Z}_+$, we have 
\[r=\dim Z_r=|\mathcal{P}\cap ([1,r]\times [1,\infty))|.\]
 \item \label{lemmadedpart3c} For all $r,s\in \mathbb{Z}_+$, we have
\[\dim \frac{Z_r}{Z_r\cap B_s}=|\mathcal{P}\cap ([1,r]\times (s,\infty))|.\]
 \end{enumerate}
 \item \label{lemmadedpart4}\hfill
 \begin{enumerate}[(a)]
 \item \label{lemmadedpart4a}For all $b\in \mathbb{Z}_+$, there is a unique $d\in \mathbb{Z}_+$ such that $(b,d)\in \mathcal{P}$.
 \item \label{lemmadedpart4b} For all $d\in\mathbb{Z}_+$, there is at most one $b\in \mathbb{Z}_+$ such that $(b,d)\in \mathcal{P}$.
 \end{enumerate}
 \item \label{lemmadedpart5} The choice of $\mathcal{P}$ is unique.
 
 \end{enumerate}
\end{lemma}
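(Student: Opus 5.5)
The plan is to derive everything from a common adapted basis for the two flags $(Z_r)_r$ and $(B_s)_s$, with the pairing read off from the dimension function
\[f(r,s)=\dim Z_r\cap B_s ,\qquad r,s\ge 0,\]
where $Z_0=B_0=0$. First I record the elementary facts. We have $\dim Z_r=r$. Also $B_s=B_{s-1}+\mathbb{F}_q\,(\text{row } s \text{ of } L)$, so $\dim B_s-\dim B_{s-1}\in\{0,1\}$, and $B_s\subseteq Z_s$ because $L$ is lower triangular. I set
\[\mu(r,s)=f(r,s)-f(r-1,s)-f(r,s-1)+f(r-1,s-1).\]
Since $Z_r\cap B_s$ grows by at most one dimension when either index is increased, a standard modular-lattice argument shows $\mu(r,s)\in\{0,1\}$. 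It also shows that for fixed $r$ there is at most one $s$ with $\mu(r,s)=1$, and for fixed $s$ at most one $r$. I then define $\mathcal{P}=\{(r,s):\mu(r,s)=1\}$. Part (5) is immediate from this: if $\mathcal{P}$ satisfies (3a), then $\mathcal{P}$ is determined by inclusion–exclusion on $f$.

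For the basis I would use the algorithmic (Gaussian elimination) route, processing $s=1,2,\dots$. Call $s$ a death time if $B_s\ne B_{s-1}$. In that case pick $w\in B_s\setminus B_{s-1}$. Among all such $w$, I choose one minimizing $b(w)$, reducing modulo $B_{s-1}$ by triangular elimination, and call this minimum $r$. I then set $v_{(r,s)}=w$ and check that $\mu(r,s)=1$. Next I handle every $r$ that never gets paired, which is possible only if $Z_r\not\subseteq\bigcup_s B_s$. Once the almost sure statement below is available, no such $r$ exists, so every $r$ is paired with a finite $d$; this gives (4a), and (4b) holds by construction. Inducting over $s$, I then verify (2a): the chosen $v_p$ with $p\le(r,s)$ span $Z_r\cap B_s$. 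Linear independence comes from distinct pivots, namely distinct birth coordinates. Parts (1), (2b), (3a)--(3c) then follow formally: (3c) is (3b) minus (3a), and (3b) uses (4a).

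The only probabilistic input, and the step I expect to be the main obstacle, is to show that almost surely $\bigcup_s B_s=\Finf$. Equivalently, every $e_r$ has finite death time, which is also needed for $\{v_p\}$ to be a basis of the whole space. I would prove that for each fixed $r$, almost surely $Z_r\subseteq B_s$ for some $s$. Let $W_n$ be the image of $Z_r$ in $Z_n/B_n$, so that $\dim W_n\le r$. With probability $1/q$, independently of the past, row $n+1$ has zero diagonal entry; in that case it is a uniform vector of $Z_n$, hence uniform in $Z_n/B_n$. The difficulty is that $k_n=\dim Z_n/B_n$ may be large, so landing in $W_n$ is unlikely. I would control this using the Markov chain $k_n$. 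The step probabilities are: it increases by $1$ with probability $(1-1/q)q^{-k}$, decreases by $1$ with probability $1/q-q^{-k-1}$, and otherwise stays. This chain is positive recurrent, so it visits the state $k=\dim W_n$ infinitely often. At each such visit $W_n$ is the whole quotient, and the next zero-diagonal row reduces $\dim W_n$ with probability bounded below. A Borel--Cantelli/strong Markov argument then drives $\dim W_n$ to $0$. Intersecting over all $r$ gives the almost sure conclusion.
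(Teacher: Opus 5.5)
\textbf{The probabilistic step fails.} Your linear-algebra part is sound. The second difference $\mu$ is $\{0,1\}$-valued with at most one $1$ in each row and column. Your minimal-birth-time choice of $w\in B_s\setminus B_{s-1}$ is exactly the paper's $b_s$. The problem is the step that carries (4a), and with it (2b), (3b), (3c) and the basis property: it rests on a wrong transition computation.

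The process $k_n=\dim Z_n/B_n=\dim\ker L_n$ never decreases. At every step $Z_{n+1}=Z_n\oplus\mathbb{F}_q e_{n+1}$ gains a dimension. $B_{n+1}$ gains a dimension unless the new row already lies in $B_n$. So a zero-diagonal row outside $B_n$ leaves $k$ unchanged rather than lowering it. The true transitions are $k\to k+1$ with probability $q^{-(k+1)}$ and $k\to k$ otherwise; this is the chain $N_n$ of \eqref{eqNntransition}. That chain is transient, with $k_n\to\infty$ a.s. (of order $\log_q n$), not positive recurrent.

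Now $w_n=\dim W_n\le k_n$ is nonincreasing while $k_n$ is nondecreasing. Hence the times with $k_n=w_n$ form an a.s. finite initial segment. There are not infinitely many visits at which $W_n$ is the whole quotient. Moreover, the conditional probability that $w$ drops at the next step is $q^{-1}(q^{w_n}-1)q^{-k_n}$, which tends to $0$ rather than being bounded below.

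\textbf{How to repair it.} Tracking $W_n$ can be kept, but you need a summability argument instead of recurrence. Since $w$ never increases and drops with conditional probability at least $(1-q^{-1})q^{-k_n}$ whenever $w_n\ge 1$, it suffices to show $\sum_n q^{-k_n}=\infty$ a.s. This holds because $k$ stays at level $k$ for a geometric time with parameter $q^{-(k+1)}$, independently across levels (Lemma~\ref{lemmaincrement}). So infinitely many levels contribute at least $1$ to the sum. L\'evy's conditional Borel--Cantelli lemma then forces $w_n$ to reach $0$.

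The paper sidesteps this by decoupling in \textsf{MultiPhaseSamplingA}($r$). The number $M_{r,j}$ of classes born by time $r$ that are still alive changes only at the times $T_i'$ when the corank of the lower-right block (rows and columns after $r$) increases. These times are a.s. finite for every $i$. At each of them $M$ drops with probability $1-q^{-M}\ge 1-q^{-1}$, independently of the $T_i'$. So $M$ hits $0$ a.s., and for $r=1$ this even yields the exact law $X$ of the lifetime.
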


We call the set $\mathcal{P}$ the verbose persistence diagram corresponding to $(Z_n,B_n)_{n\ge 1}$. Using the notation
\[\Delta_{\mathbb{Z}}=\{(b,d)\in\mathbb{Z}_+^2\,:\,b<d\},\]
the persistence diagram is defined as $\mathcal{P}\cap \Delta_{\mathbb{Z}}$.

We refer to the quantities
\[\dim \frac{Z_r}{Z_r\cap B_s}=|\mathcal{P}\cap ([1,r]\times (s,\infty))|\]
that appear in part~\eqref{lemmadedpart3c} of Lemma~\ref{lemmadef} as \textbf{persistent Betti numbers} and denote them by~$\beta^{r,s}$. 
As promised earlier, the corank of $L_n$ can be recovered from the verbose persistence diagram~$\mathcal{P}$, since
\[\dim\ker L_n=\dim\frac{Z_n}{B_n}=\dim\frac{Z_n}{Z_n\cap B_n}=|\mathcal{P}\cap ([1,n]\times (n,\infty))|=\beta^{n,n}.\]

Given $i\in \mathbb{Z}_+$, we define $d_i$ as the unique $j\in \mathbb{Z}_+$ such that $(i,j)\in\mathcal{P}$. The existence and uniqueness of $d_i$ follows from part~\eqref{lemmadedpart4a} of Lemma~\ref{lemmadef}.

Given $j\in \mathbb{Z}_+$, we define
\begin{equation}\label{eqbidef}b_j=\max\{i\in\mathbb{Z}_+\,:\,(i,j)\in\mathcal{P}\}.\end{equation}
Note that it may happen that $b_{j}=-\infty$, when the maximum above is over an empty set. If $b_{j}\neq-\infty$, then $b_j$ is the unique $i\in\mathbb{Z}_+$ such that $(i,j)\in\mathcal{P}$ as part \eqref{lemmadedpart4b} of Lemma~\ref{lemmadef} states.

With the help of these quantities, we can express $\dim\ker L_n=\beta^{n,n}$ in several other ways. Namely,
\begin{align}
\dim\ker L_n&=
|\{i\in \{1,2,\dots,n\}\,:\,d_i>n\}|\nonumber\\
&=n-|\mathcal{P}\cap [1,n]^2|\nonumber\\
&=|\{j\in \{1,2,\dots,n\}\,:\,b_j=-\infty\}|.\label{eqcorankexpr}
\end{align}

\begin{figure}[h!]

%\centering
\begin{tikzpicture}
\filldraw[blue](-0.25,0.15) circle (2pt);
\filldraw[black](0.15,0.3) circle (2pt);
\filldraw[blue](-0.25,0.45) circle (2pt);
\filldraw[black](0.45,0.6) circle (2pt);
\filldraw[black](0.3,0.75) circle (2pt);
\filldraw[black](0.9,0.9) circle (2pt);
\filldraw[black](1.05,1.05) circle (2pt);
\filldraw[blue](-0.25,1.2) circle (2pt);
\filldraw[black](1.35,1.35) circle (2pt);
\filldraw[black](1.5,1.5) circle (2pt);
\filldraw[black](1.65,1.65) circle (2pt);
\filldraw[black](1.8,1.8) circle (2pt);
\filldraw[black](1.2,1.95) circle (2pt);
\filldraw[black](1.95,2.1) circle (2pt);
\filldraw[black](2.1,2.25) circle (2pt);
\filldraw[black](0.75,2.4) circle (2pt);
\filldraw[black](2.4,2.55) circle (2pt);
\filldraw[black](2.7,2.7) circle (2pt);
\filldraw[black](2.25,2.85) circle (2pt);
\filldraw[black](3,3) circle (2pt);
\filldraw[black](2.85,3.15) circle (2pt);
\filldraw[black](3.3,3.3) circle (2pt);
\filldraw[black](3.45,3.45) circle (2pt);
\filldraw[black](3.15,3.6) circle (2pt);
\filldraw[black](3.75,3.75) circle (2pt);
\filldraw[black](0.6,3.9) circle (2pt);
\filldraw[black](3.9,4.05) circle (2pt);
\filldraw[black](4.2,4.2) circle (2pt);
\filldraw[black](3.6,4.35) circle (2pt);
\filldraw[blue](-0.25,4.5) circle (2pt);
\filldraw[black](4.65,4.65) circle (2pt);
\filldraw[black](4.5,4.8) circle (2pt);
\filldraw[black](4.95,4.95) circle (2pt);
\filldraw[black](4.8,5.1) circle (2pt);
\filldraw[black](5.1,5.25) circle (2pt);
\filldraw[black](5.4,5.4) circle (2pt);
\filldraw[black](4.35,5.55) circle (2pt);
\filldraw[black](5.55,5.7) circle (2pt);
\filldraw[black](5.25,5.85) circle (2pt);
\filldraw[black](6,6) circle (2pt);
\filldraw[black](6.15,6.15) circle (2pt);
\filldraw[black](6.3,6.3) circle (2pt);
\filldraw[black](6.45,6.45) circle (2pt);
\filldraw[black](5.85,6.6) circle (2pt);
\filldraw[black](2.55,6.75) circle (2pt);
\filldraw[black](6.9,6.9) circle (2pt);
\filldraw[black](6.6,7.05) circle (2pt);
\filldraw[black](7.2,7.2) circle (2pt);
\filldraw[black](7.05,7.35) circle (2pt);
\filldraw[black](4.05,7.5) circle (2pt);
\filldraw[black](7.65,7.65) circle (2pt);
\filldraw[black](7.5,7.8) circle (2pt);
\filldraw[black](7.35,7.95) circle (2pt);
\filldraw[black](8.1,8.1) circle (2pt);
\filldraw[black](7.95,8.25) circle (2pt);
\filldraw[black](8.4,8.4) circle (2pt);
\filldraw[black](7.8,8.55) circle (2pt);
\filldraw[black](8.55,8.7) circle (2pt);
\filldraw[black](8.7,8.85) circle (2pt);
\filldraw[black](8.25,9) circle (2pt);
\filldraw[black](9.15,9.15) circle (2pt);
\filldraw[black](9.3,9.3) circle (2pt);
\filldraw[black](9,9.45) circle (2pt);
\filldraw[black](9.6,9.6) circle (2pt);
\filldraw[blue](-0.25,9.75) circle (2pt);
\filldraw[black](9.45,9.9) circle (2pt);
\filldraw[black](10.05,10.05) circle (2pt);
\filldraw[black](9.75,10.2) circle (2pt);
\filldraw[black](10.2,10.35) circle (2pt);
\filldraw[black](10.5,10.5) circle (2pt);
\filldraw[black](10.65,10.65) circle (2pt);
\filldraw[black](10.8,10.8) circle (2pt);
\filldraw[black](10.35,10.95) circle (2pt);
\filldraw[black](8.85,11.1) circle (2pt);
\filldraw[black](11.1,11.25) circle (2pt);
\filldraw[black](11.4,11.4) circle (2pt);
\filldraw[black](11.25,11.55) circle (2pt);
\filldraw[black](11.55,11.7) circle (2pt);
\filldraw[black](11.85,11.85) circle (2pt);
\filldraw[black](10.95,12) circle (2pt);
\filldraw[black](12.15,12.15) circle (2pt);
\filldraw[black](12.3,12.3) circle (2pt);
\filldraw[black](11.7,12.45) circle (2pt);
\filldraw[black](12.45,12.6) circle (2pt);
\filldraw[black](12,12.75) circle (2pt);
\filldraw[black](12.9,12.9) circle (2pt);
\filldraw[black](13.05,13.05) circle (2pt);
\filldraw[black](13.2,13.2) circle (2pt);
\filldraw[black](13.35,13.35) circle (2pt);
\filldraw[black](12.75,13.5) circle (2pt);
\filldraw[black](6.75,13.65) circle (2pt);
\filldraw[black](13.65,13.8) circle (2pt);
\filldraw[black](9.9,13.95) circle (2pt);
\filldraw[black](14.1,14.1) circle (2pt);
\filldraw[black](13.95,14.25) circle (2pt);
\filldraw[black](14.4,14.4) circle (2pt);
\filldraw[black](13.8,14.55) circle (2pt);
\filldraw[black](14.7,14.7) circle (2pt);
\filldraw[black](14.55,14.85) circle (2pt);
\filldraw[black](14.85,15) circle (2pt);
\filldraw[black](15.15,15.15) circle (2pt);
\filldraw[black](15,15.3) circle (2pt);
\filldraw[black](15.45,15.45) circle (2pt);
\filldraw[black](15.3,15.6) circle (2pt);
\filldraw[black](15.75,15.75) circle (2pt);
\filldraw[black](13.5,15.9) circle (2pt);
\filldraw[black](15.9,16.05) circle (2pt);
\filldraw[black](16.2,16.2) circle (2pt);
\filldraw[black](16.35,16.35) circle (2pt);
\filldraw[black](16.5,16.5) circle (2pt);
\draw[->,black, thin] (0,0) -- (0,16.5);
\draw[->,black, thin] (0,0) -- (16.5,0);
\end{tikzpicture}

\caption{A sample of the verbose persistence diagram with the choice of~$q=2$. The black points correspond to points of the verbose persistence diagram with death time at most 110. The blue points left to the death axis correspond to times where the persistence diagram does not contain any points with that given death time, in other words, to times where the corank of $L_n$ grows, see \eqref{eqcorankexpr}. Note that in agreement with Theorem~\ref{thmlifetime}, most points are found near the diagonal line.}
\end{figure}

Theorem~\ref{thmexplicit} below provides an explicit description of the distribution of $\mathcal{P}\cap [1,n]^2$. Before we state this result, we need a few more definitions.

We say that $S\subset \overline{\Delta}_{\mathbb{Z}}\cap [1,n]^2$ is admissible if
\begin{itemize}
 \item For all $b\in \{1,2,\dots,n\}$, we have at most one $d$, such that $(b,d)\in S$.
 \item For all $d\in \{1,2,\dots,n\}$, we have at most one $b$, such that $(b,d)\in S$.
\end{itemize}

Given an admissible set $S\subset \overline{\Delta}_{\mathbb{Z}}\cap [1,n]^2$ and $i\in\{1,2,\dots,n\}$, we define
\[b_i(S)=\begin{cases} \quad b&\text{ if $b$ is the unique number such that $(b,i)\in S$,}\\
-\infty&\text{ if there is not any $b$ such that $(b,i)\in S$.}
\end{cases}.\]

The inversion number of an admissible set $S\subset \overline{\Delta}_{\mathbb{Z}}\cap [1,n]^2$ is defined as 
\[\inv(S)=|\{(i,j)\in \{1,2,\dots,n\}^2\,:\,i<j, b_i(S)>b_j(S)\}|.\]

Finally, we define
\[\Sigma(S)=\sum_{(b,d)\in S} b.\]

%Note that $b>-\infty$ for all $b\in\mathbb{Z}_+$, and $-\infty\not>-\infty$.

\begin{theorem}\label{thmexplicit}\hfill
\begin{enumerate}[(a)]
 \item With probability $1$, the set $\mathcal{P}\cap [1,n]^2$ is admissible.
 \item For any admissible set $S\subset \overline{\Delta}_{\mathbb{Z}}\cap [1,n]^2$, we have
\[\mathbb{P}(\mathcal{P}\cap [1,n]^2=S)=\left(\frac{q-1}{q}\right)^{|S|}q^{\Sigma(S)+\inv(S)-{{n+1}\choose{2}}}.\]
\end{enumerate}

\end{theorem}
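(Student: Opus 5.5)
The plan is to read the verbose persistence diagram off a Gaussian-elimination process that adds the rows of $L$ one at a time, and then multiply the conditional probabilities of the individual steps. Write $r_s$ for the $s$-th row of $L$, viewed as an element of $\Finf$. Then $r_s\in Z_s$, $r_s$ is uniform on $Z_s$, it is independent of $r_1,\dots,r_{s-1}$, and $B_s=\mathrm{span}(r_1,\dots,r_s)$. For a nonzero $v\in\Finf$, the birth time $b(v)$ is the index of the last nonzero coordinate of $v$.

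\emph{Step 1: the elimination process.} At time $s$, reduce $r_s$ against an echelon basis of $B_{s-1}$. This basis has distinct leading (last nonzero) indices forming a set $T_{s-1}\subseteq[1,s-1]$. If the reduced vector is zero, then $r_s\in B_{s-1}$, and we record no point with death time $s$. Otherwise its leading index is some $b\le s$ with $b\notin T_{s-1}$. We record the point $(b,s)$, add the reduced vector $w_s$ to the basis, and set $T_s=T_{s-1}\cup\{b\}$. Because leading indices never repeat, the recorded set is admissible. I would then check that the vectors $w_s$, completed by the standard vectors $e_i$ at indices $i$ that never become leading indices, satisfy part~\eqref{lemmadedpart2a} of Lemma~\ref{lemmadef}. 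Presumably this is exactly the algorithmic proof given for that lemma. The uniqueness in part~\eqref{lemmadedpart5} then shows that $\mathcal{P}\cap[1,n]^2$ is the set of points recorded in the first $n$ steps, which proves (a). It also shows that the event $\{\mathcal{P}\cap[1,n]^2=S\}$ is the event that, for every $s\le n$, step $s$ outputs $b_s(S)$, where an output of $-\infty$ means that no point is recorded.

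\emph{Step 2: the one-step law.} Condition on $r_1,\dots,r_{s-1}$ and let $C=[1,s]\setminus T_{s-1}$; note that $s\in C$. The reduced vector lies in $\mathrm{span}\{e_c: c\in C\}$, and it is the image of $r_s$ under a linear bijection $Z_s/B_{s-1}\to \mathrm{span}\{e_c:c\in C\}$. Since $r_s$ is uniform on $Z_s$, its coordinates at the positions in $C$ are i.i.d.\ uniform on $\mathbb{F}_q$. Hence the step outputs $b\in C$ with probability $\frac{q-1}{q}\,q^{-|\{c\in C:c>b\}|}$, and outputs $-\infty$ with probability $q^{-|C|}$. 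This law depends only on $T_{s-1}=\{b_j(S):j<s\}\setminus\{-\infty\}$. Therefore, by the chain rule,
\[
\mathbb{P}(\mathcal{P}\cap[1,n]^2=S)=\left(\tfrac{q-1}{q}\right)^{|S|}q^{-E},\qquad E=\sum_{s=1}^n\bigl|\{c\in[1,s]\setminus T_{s-1}: c>b_s(S)\}\bigr|,
\]
where $c>-\infty$ holds for every $c$. Both sides vanish consistently for non-admissible $S$, so this covers every admissible $S$.

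\emph{Step 3: the combinatorial identity.} It remains to show that $E=\binom{n+1}{2}-\Sigma(S)-\inv(S)$. Treat $-\infty$ as $0$ in the count, and write $b_s$ for $b_s(S)$. Then $|\{c\in[1,s]:c>b_s\}|=s-b_s$. The elements of $T_{s-1}$ lying above $b_s$ are exactly the values $b_j$ with $j<s$ and $b_j>b_s$; all such $b_j$ are finite and at most $s-1$. Hence the $s$-th summand of $E$ equals
\[
s-b_s-|\{j<s: b_j>b_s\}|.
\]
Summing over $s$ gives $\binom{n+1}{2}-\Sigma(S)-\inv(S)$. Here the inversion count agrees with the definition of $\inv$, including the pairs where $b_s=-\infty$ and $b_j$ is finite. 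This proves (b).

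I expect the main obstacle to be Step 1: rigorously identifying $\mathcal{P}\cap[1,n]^2$ with the output of the elimination process, via the basis properties of Lemma~\ref{lemmadef} and its uniqueness clause. Steps 2 and 3 are routine once that identification is in place.
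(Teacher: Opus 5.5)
Your proposal is correct and is essentially the paper's proof: your leading-index set $T_s$ is exactly $\{i : A(i,s)=1\}$ for the paper's matrix $A$, your one-step law (free coordinates of the reduced vector i.i.d.\ uniform) is the content of the paper's \textsf{NextLine} lemma, and your Step~3 is the paper's use of Lemma~\ref{lemmaBasicProp}(d) to turn the exponent into $\inv(S)$. The only thing to tidy is Step~1: do not appeal to part~(5) of Lemma~\ref{lemmadef} wholesale, since your $e_i$-completed family is not indexed by points of the recorded set; instead note that part~(3a) alone determines the diagram, because $(r,s)$ belongs to it iff $\dim Z_r\cap B_s-\dim Z_{r-1}\cap B_s-\dim Z_r\cap B_{s-1}+\dim Z_{r-1}\cap B_{s-1}=1$, and your echelon basis verifies (3a) directly.
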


Given a point $(b,d)\in \mathcal{P}$, the lifetime of this point is defined as $d-b$. Our next theorem provides a law of large numbers for the distribution of lifetimes.

\begin{theorem}\label{thmlifetime}
Let\[X=\sum_{i=1}^{G-1} Y_i,\]
where $G$ is a shifted geometric random variable\footnote{A $\mathbb{Z}_+$-valued random variable $Z$ is a shifted geometric random variable with success probability $p$, if $\mathbb{P}(Z=k)=p(1-p)^{k-1}$ for all $k\in\mathbb{Z}_+$.} with success probability $\frac{q-1}q$, $Y_i$ is a shifted geometric random variable with success probability $q^{-i}$, and $G,Y_1,Y_2,Y_3,\dots$ are all independent.

For almost all choices of $L$, we have that for all $k\in \mathbb{Z}_{\ge 0}$,
\[\lim_{n\to\infty} \frac{|\{(b,d)\in \mathcal{P}\cap [1,n]^2\,:\,d-b=k\}|}{n}=\mathbb{P}(X=k).\]

\end{theorem}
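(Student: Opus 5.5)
The plan is to turn the verbose persistence diagram into a Markov chain driven by i.i.d. random choices, so that the lifetime of each birth time $i$ is an explicit function of a bounded window of these choices. The law of large numbers then follows from finite-range dependence.

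\emph{Step 1: the step dynamics.} At time $n$ the state is the set $A_{n-1}=\{i\le n-1: d_i>n-1\}$ of alive birth times. The new index $n$ joins this set. Writing the enlarged set in decreasing order $a_1=n>a_2>\dots>a_m$, I expect the step to be:
\begin{itemize}
\item with probability $(q-1)q^{-k}$ the point with birth time $a_k$ dies at time $n$, that is, $(a_k,n)\in\mathcal{P}$, for $k=1,\dots,m$;
\item with probability $q^{-m}$ nothing dies, so $b_n=-\infty$.
\end{itemize}
This should be read off from Theorem~\ref{thmexplicit}. Compare the probabilities of the admissible sets $S\cup\{(a_k,n)\}$ and $S$, where $S$ is the diagram on $[1,n-1]^2$ extended to $[1,n]^2$. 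Going from $n-1$ to $n$ changes the exponent $-\binom{n+1}{2}$ by $-n$. Adding $(a_k,n)$ adds $a_k$ to $\Sigma$. It also adds to $\inv$ the number of $j<n$ with $b_j(S)<a_k$. Because the sets $\{b_j\}$ and the alive set partition the indices, these contributions combine to $q^{-(k-1)}$ times the factor for $k=1$. Alternatively, the same step law can be checked directly from the reduction algorithm in the proof of Lemma~\ref{lemmadef}: the new row has uniform entries, and the last alive coordinate in its reduced form decides which point is killed.

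\emph{Step 2: i.i.d. representation and reduction to a rank process.} Let $K_n$ be i.i.d. with $\mathbb{P}(K_n=k)=(q-1)q^{-k}$ for $k\ge 1$. At step $n$, kill $a_{K_n}$ if $K_n\le m$, and kill nothing otherwise. By Step 1 this reproduces the law of $\mathcal{P}$. Now fix $i$ and let $R_t$ be its rank in the alive list just after time $t$'s insertion, so $R_i=1$. At step $t$ with rank $r$:
\begin{itemize}
\item if $K_t=r$, then $i$ dies and $d_i=t$;
\item if $K_t<r$, a younger point dies and the rank stays $r$ at the next step (one removed, one inserted);
\item if $K_t>r$, the rank becomes $r+1$, and it is irrelevant whether anything older is actually killed.
\end{itemize}
Hence $d_i-i$ is a function only of $K_i,K_{i+1},\dots$. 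In fact the event $\{d_i-i=k\}$ is measurable with respect to $(K_i,\dots,K_{i+k})$, and its probability does not depend on $i$.

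\emph{Step 3: identify the law with $X$, then conclude.} I would compute the lifetime distribution from the rank chain. At the first step, $i$ dies immediately with probability $\frac{q-1}{q}$, which gives lifetime $0$. Otherwise its rank only increases, and the chain stays at each level $r$ until it either dies or moves up.

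Matching this with $X=\sum_{i<G}Y_i$ is the step I expect to be the main obstacle, because the two descriptions are organised differently. One has to recognise $G$ as the number of rank levels visited, with $Y_r$ the total time spent at level $r$. I would try a generating-function comparison first; failing that, a direct coupling that reparametrises the geometric choices level by level.

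Once the law is identified, the indicators $I_i=\mathbf 1\{d_i-i=k\}$ are identically distributed and $(k+1)$-dependent. The strong law of large numbers for $m$-dependent sequences, applied to each residue class mod $k+1$, gives almost surely
\[\frac1n\sum_{i\le n}I_i\to\mathbb{P}(X=k).\]
The counted set $\{(b,d)\in\mathcal{P}\cap[1,n]^2: d-b=k\}$ differs from $\{i\le n: I_i=1\}$ only for $i>n-k$, which is at most $k$ terms. Intersecting over countably many $k$ gives the statement for almost all $L$.
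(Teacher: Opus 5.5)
Your proof is correct, apart from two small slips noted below. It differs from the paper's mainly in how the almost-sure statement is obtained.

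\textbf{Comparison with the paper.} The paper treats $i=1$ via \textsf{MultiPhaseSamplingA}$(1)$ and then uses shift invariance. In that picture, the block $A(a,b)$, $a,b\ge 2$, is an independent copy of the whole process, and its corank-increment times $T'_j$ have independent geometric increments (Lemma~\ref{lemmaincrement}). At each such time row $1$, if still alive, is killed with probability $\frac{q-1}{q}$ (Lemma~\ref{Markovlemma}), so $d_1-1=T'_{G-1}$. Your rank chain is the same mechanism seen from row $i$: your holding time at rank $j+1$ is exactly the paper's $T'_j-T'_{j-1}$.

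For the limit, the paper applies the ergodic theorem to the finite-state Markov chain of windows $f^{(j)}\in\{0,1\}^{k+1}$. It then identifies the limiting constant separately, through the expectation computation. You instead encode the coin tosses by the i.i.d.\ first-heads positions $K_n$. This makes $\mathbf{1}\{d_i-i=k\}$ an explicit function of $(K_i,\dots,K_{i+k})$, so the strong law along residue classes mod $k+1$ gives both the existence and the value of the limit at once. You need no irreducibility check and no separate argument for the constant. The paper's route has the advantage of reusing sampling machinery it needs anyway for Lemma~\ref{lemmadef} and Theorem~\ref{thmfluc}.

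\textbf{Correction in Step 1.} Adding $(a_k,n)$ creates the inversions $(j,n)$ with $b_j(S)>a_k$, not $b_j(S)<a_k$. These $j$ are the death times of the indices in $(a_k,n-1]$ that are already dead, so there are $(n-1-a_k)-(k-2)$ of them. The exponent then changes by $a_k+(n-1-a_k)-(k-2)-n=1-k$, which gives $(q-1)q^{-k}$ as you claimed. More directly, Step 1 is just Lemma~\ref{lemmaNextLine} combined with Lemma~\ref{lemmaBasicProp}(a): the zeros of the appended column are exactly the alive indices, and the coins visit them in the order $a_1,a_2,\dots$.

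\textbf{Correction in Step 3.} The step you flag as the main obstacle is immediate, but your labelling is off by one. At rank $r$ you leave with probability $\mathbb{P}(K\ge r)=q^{-(r-1)}$. Conditionally on leaving, you die with probability $\mathbb{P}(K=r\mid K\ge r)=\frac{q-1}{q}$, independently of $r$ and of the holding time. Rank $1$ lasts exactly the single step at time $i$. So $G$ is the number of ranks visited, and $Y_j$ is the holding time at rank $j+1$, not rank $j$. This gives $d_i-i=\sum_{j=1}^{G-1}Y_j$ with the required independence.
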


In Theorem~\ref{thmfluc} below, we describe the limiting fluctuations of the Betti numbers $\beta^{r_n,r_n+t_n}$, where $r_n\to \infty$ and $t_n\ge 0$. We will distinguish four phases depending on how fast $t_n$ grows.

Before doing this, we need a few definitions.

For $u\ge 0$, let $J_u$ be a $\mathbb{Z}_{\ge 0}$-valued random variable such that
\[\mathbb{P}(J_u=k)=q^{-k(k+u)} \prod_{i=1}^k (1-q^{-i})^{-1}\prod_{i=1}^{k+u} (1-q^{-i})^{-1}\prod_{i=1}^\infty (1-q^{-i}).\]

The distribution above arises as the universal limiting distribution of the corank of $(n+u)\times n$ matrices with independent entries~\cite{wood2019random}. 

Given $\chi>0$, let $\mathcal{L}_{1,q^{-1},\chi}$ be a $\mathbb{Z}$-valued random variable such that for all $x\in \mathbb{Z}$, we have

\[\mathbb{P}(\mathcal{L}_{1,q^{-1},\chi}=x)=\frac{1}{\prod_{i=1}^{\infty} (1-q^{-i})}\sum_{m=0}^\infty \exp(-\chi q^{m-x})\frac{(-1)^m q^{-{{m}\choose{2}}}}{\prod_{j=1}^m (1-q^{-j})}.\]

Van Peski~\cite{van2026rank} used the random variables $\mathcal{L}_{1,q^{-1},\chi}$ to describe the limiting fluctuations of $\dim\ker L_n=\beta^{n,n}$, see part \eqref{fluc1} of Theorem~\ref{thmfluc} and Lemma~\ref{LemmaRVP}. The author extended these results to lower triangular matrices where the entries on and below the diagonal are still i.i.d. but no longer uniform~\cite{meszaros2025fluctuations}. The random variables $\mathcal{L}_{1,q^{-1},\chi}$ appeared earlier in the work of Van Peski~\cite{vanp1,vanp2,vanp3} on the corank of matrix products, or more generally, on the cokernels of matrix products of Haar uniform matrices over the $p$-adic integers. Nguyen and Van Peski~\cite{nguyen2024rank} proved that this behavior of matrix products is universal, that is, the cokernels of products of matrices with i.i.d. entries show the same limiting behavior regardless of the distribution of the entries. The author showed that besides matrix products and lower triangular matrices this universality class contains a large class of block lower triangular matrices~\cite{meszaros2025universal,meszaros2025rank}.

Finally, we also need the technical assumption that the fractional parts $\{-\log_q(r_n)\}$ converge to $\zeta$. Note that we can always ensure this condition by passing to a subsequence. Let $\mathcal{L}=\mathcal{L}_{1,q^{-1},q^{-1-\zeta}}$. 

For $x\in \mathbb{R}$, let $\lfloor x \rceil=\lfloor x+0.5 \rfloor$ be the integer closest to $x$.

\begin{theorem}\hfill\label{thmfluc}
\begin{enumerate}[(1)]
 \item \label{fluc1} Assume that $t_n=t$ for a nonnegative constant $t$. Let $D$ be a random variable independent from $\mathcal{L}$ such that $D$ has the same distribution as $\dim\ker L_t$. Then, as $n\to\infty$,
\[\beta^{r_n,r_n+t}-\lfloor \log_q(r_n)+\zeta\rceil\text{ converges in distribution to }\mathcal{L}-D.\]
 \item \label{fluc2} Assume that $t_n\to\infty$ and $\log_q(t_n)-\log_q(r_n)\to-\infty$. Moreover, assume that fractional parts $\{-\log_q(t_n)\}$ converge to $\zeta'$. Let $\mathcal{L}'=\mathcal{L}_{1,q^{-1},q^{-1-\zeta'}}$ be independent from $\mathcal{L}$. Then, as $n\to\infty$,
\[\beta^{r_n,r_n+t_n}-(\lfloor \log_q(r_n)+\zeta\rceil-\lfloor \log_q(t_n)+\zeta'\rceil)\text{ converges in distribution to }\mathcal{L}-\mathcal{L}'.\]
 \item \label{fluc3} Assume that $\log_q(t_n)-\log_q(r_n)\to\gamma \in \mathbb{R}$. Let $\zeta'=\{\zeta-\gamma\}$. Let $\mathcal{L}'=\mathcal{L}_{1,q^{-1},q^{-1-\zeta'}}$ be independent from $\mathcal{L}$. Let $\mathcal{J}=\gamma+\zeta'-\zeta+\mathcal{L}'-\mathcal{L}$. Then
\[\beta^{r_n,r_n+t_n}\text{ converges in distribution to }\max(0,-\mathcal{J})+J_{|\mathcal{J}|},\]
 where $J_0,J_1,\dots$ are independent from $\mathcal{J}$.
 \item \label{fluc4} Assume that $\log_q(t_n)-\log_q(r_n)\to\infty$, then
\[\lim_{n\to\infty}\mathbb{P}(\beta^{r_n,r_n+t_n}=0)=1.\]
\end{enumerate}
\end{theorem}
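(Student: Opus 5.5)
The starting point is the identity $\beta^{r,s}=\dim Z_r/(Z_r\cap B_s)$. I would rewrite it in matrix language. Write $s=r+t$. The space $Z_r\cap B_s$ consists of the vectors $vL$ with $v\in Z_s$ whose last $t$ coordinates vanish. Consider the block decomposition of $L_s$ with blocks $A=L_r$ (top left), $C$ the $t\times r$ bottom left block, and $T$ the $t\times t$ bottom right lower triangular block. Then
\[\dim Z_r\cap B_s=\dim\{(x,y)\,:\,yT=0\}\text{ modulo the kernel of }L_s .\]
Hence
\[\beta^{r,r+t}=r-\operatorname{rank}\begin{pmatrix}A\\ C'\end{pmatrix},\]
where $C'$ is $C$ restricted to the rows indexed by $\ker T$ (left kernel). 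More precisely, the relevant space is $\RowS(A)+\{yC\,:\,yT=0\}$. Conditionally on $T$, the matrix $C$ is an independent uniform $t\times r$ matrix. So if $k=\dim\ker T$ (distributed as $\dim\ker L_t$), then $\{yC: yT=0\}$ is the row space of a uniform $k\times r$ matrix $M$ independent of $A$. This gives the distributional identity
\[\beta^{r,r+t}=\dim\ker\left(\text{the }(r+k)\times r\text{ matrix }\begin{pmatrix}A\\ M\end{pmatrix}\right)=\operatorname{corank}(A)-\operatorname{rank}(\text{image of }M\text{ in }\mathbb{F}_q^r/\RowS(A)),\]
with $k\sim\dim\ker L_t$ independent of $A$.

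Adding $k$ uniform independent vectors to a space of codimension $c$ leaves a codimension whose law depends only on $(c,k)$. It is the corank of a uniform $(c+k)\times c$-type matrix, namely the corank of a uniform $k\times c$ matrix, which is $\max(c-k,0)$ plus a correction. So $\beta^{r,r+t}\overset{d}{=}c-\operatorname{rank}(U_{k\times c})$, where $c=\dim\ker L_r$ and $k=\dim\ker L_t$ are independent, and $U$ is uniform given them.

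Each phase then follows from Van Peski's limit (Lemma~\ref{LemmaRVP}): $\dim\ker L_n-\lfloor\log_q n+\zeta_n\rceil\to\mathcal{L}_{1,q^{-1},q^{-1-\zeta}}$. I would also use the standard fact that the corank of a uniform $k\times c$ matrix with $c-k\to u$ fixed converges to $J_{|u|}$ (shifted appropriately). When $k-c\to+\infty$ or $c-k\to+\infty$, $U$ has full rank with probability tending to $1$.

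\textbf{Phase (1):} $k$ is bounded and $c\to\infty$, so $U$ has full row rank $k$ with high probability. Then $\beta=c-k$, which gives $\mathcal{L}-D$.

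\textbf{Phase (2):} $c-k\approx\log_q r_n-\log_q t_n\to\infty$, so again $\beta=c-k$ with high probability. Independence of $c$ and $k$ gives $\mathcal{L}-\mathcal{L}'$ after centering. Note $\{-\log_q t_n\}\to\zeta'$.

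\textbf{Phase (3):} $c-k$ is tight. Writing $c=\lfloor\log_q r_n+\zeta\rceil+\mathcal{L}$ and $k=\lfloor\log_q t_n+\zeta'\rceil+\mathcal{L}'$, with the rounding compatible since $\zeta'=\{\zeta-\gamma\}$, we get $k-c\to\mathcal{J}$. Here one checks that $\gamma+\zeta'-\zeta$ is an integer. Conditionally on this, $c-\operatorname{rank}U_{k\times c}$ equals $\max(0,c-k)$ plus the corank excess of a $\min\times\max$ uniform matrix with large dimensions. This converges to $J_{|k-c|}$ independently, giving $\max(0,-\mathcal{J})+J_{|\mathcal{J}|}$.

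\textbf{Phase (4):} $k-c\to\infty$, so $U$ has rank $c$ with high probability and $\beta=0$.

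The main obstacle is the first reduction: verifying carefully that $Z_r\cap B_s$ corresponds to $\RowS(A)$ plus $\{yC:yT=0\}$, and that conditional on $(A,T)$ the relevant part of $C$ is uniform and independent. A second point needing care is the rank-of-uniform-matrix asymptotics with large dimensions used in the limits.
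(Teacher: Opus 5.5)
Your proof is correct. The analysis of the four regimes is the same as the paper's:
\begin{itemize}
\item Lemma~\ref{LemmaRVP} for the centred coranks;
\item the first-moment bound of Lemma~\ref{lemmadegenerate} in regimes (1), (2) and (4);
\item the $(n+u)\times n$ limit of Lemma~\ref{deterministicsizelimit}, packaged as Lemma~\ref{randomnondegenerate}, in regime (3), with the same integrality of $\gamma+\zeta'-\zeta$.
\end{itemize}
In regime (3) you should say explicitly that $c$ and $k$ both tend to infinity in probability, as that lemma requires. This holds since $r_n,t_n\to\infty$.

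Where you differ is the key reduction, which is exactly Lemma~\ref{perbettiasrandommatrices}.

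The paper derives it from its sampling machinery. It runs \textsf{MultiPhaseSamplingA}$(r)$ and reads $\beta^{r,r+t}$ off as the number of zeros among $A(1,r+t),\dots,A(r,r+t)$. It then observes (Lemma~\ref{Markovlemma}) that this count, viewed at the corank-jump times $T_i'$ of the shifted matrix, is a Markov chain. Its transition matrix $P$ is the same as for the kernel dimension of a uniform matrix to which rows are appended.

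You get it directly from the block form $L_{r+t}=\begin{pmatrix}L_r&0\\ C&T\end{pmatrix}$. Since $(x,y)L_{r+t}=(xL_r+yC,\,yT)$, one has exactly $Z_r\cap B_{r+t}=\RowS(L_r)+\{yC\,:\,yT=0\}$. The step you flagged as the main obstacle is routine. Let the rows of a $k\times t$ matrix $Y$, chosen as a function of $T$, form a basis of the left kernel of $T$. Then $C\mapsto YC$ followed by the quotient map to $\mathbb{F}_q^r/\RowS(L_r)\cong\mathbb{F}_q^c$ is a surjective linear map. It therefore pushes the uniform block $C$, which is independent of $(L_r,T)$, to a uniform $k\times c$ matrix.

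Your route has three advantages:
\begin{itemize}
\item It is shorter and self-contained; it needs neither Lemma~\ref{lemmadef} nor \textsf{NextLine}.
\item It expresses $\beta^{r,r+t}$ pathwise as the corank of an explicit matrix built from $L$.
\item It shows that only the uniformity of $C$ and its independence from $(L_r,T)$ are used; the law of the diagonal blocks is irrelevant.
\end{itemize}
The paper's route comes for free from the algorithm it needs anyway for Theorems~\ref{thmexplicit} and~\ref{thmlifetime}. It also displays the whole process $t\mapsto\beta^{r,r+t}$ as a time-changed Markov chain, a natural starting point for the joint fluctuations raised in the paper's closing open question.
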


In the past decade, the rank of random matrices over finite fields, and more generally the cokernels of random matrices, have been studied extensively. A primary motivation for this line of research is the conjecture that the statistics of the class groups of quadratic number fields behave like the cokernels of Haar-uniform random matrices over the $p$-adic integers~\cite{cohen2006heuristics,venkatesh2010statistics}. The limiting distribution of the cokernels of Haar-uniform random square matrices over the $p$-adic integers is given by the Cohen-Lenstra distribution~\cite{friedman1989distribution}. Much of the research in this area has focused on proving universality results: for a large class of random matrices, the limiting distribution of their cokernels is given by the Cohen-Lenstra distribution or a variant thereof~\cite{wood2019random,recent1,recent2,recent3,recent4,recent5,recent6,recent7,recent8,recent9,recent10,meszaros2020distribution,meszaros2024phase}. The majority of these results were obtained using Wood's powerful moment method~\cite{wood2017distribution} and its generalizations~\cite{sawin2022moment,nguyen2024rank}.
In recent years, a growing number of results have instead adopted a dynamical perspective. In this framework, one considers a randomly evolving sequence of matrices and describes the evolution of their cokernels. This viewpoint is valuable even when the primary interest lies in the single-time marginals of the process. This perspective is most prominently present in the work of Van Peski on the time evolution of the cokernels of matrix products~\cite{vanp1,vanp2,vanp3}. See also \cite{lvov2024random,meszaros2025rank,shen2026quantative} for other results in $p$-adic matrix theory where this dynamical perspective and Markov-chain methods play a key role.

Another rich source of random abelian groups arises from the homology groups of random simplicial complexes. If these complexes evolve over time such as the \v{C}ech complexes considered in Section~\ref{SecCech}, then they give a random process of abelian groups. Persistent homology provides a framework for understanding this evolution.

Since homology groups can be viewed as cokernels, the study of the homology of random simplicial complexes could be seen as a part of $p$-adic random matrix theory. Despite this connection, there has been little transfer of ideas between these two fields~\cite{kahle2020cohen,lee2025distribution,meszaros2023cohen,meszaros20242,newman2023abelian}. The goal of this paper is to bridge this gap by applying the notion of persistent homology (a tool well-established in stochastic topology and topological data analysis) to the setting of lower triangular matrices, thereby providing a new perspective on this this evolving area of $p$-adic random matrix theory. 

We think that persistence diagrams can be useful for describing the time evolution of the corank of other random matrix models that evolve over time.

\bigskip

\textbf{Acknowledgments.} The author was supported by the Marie Sk\l{}odowska-Curie Postdoctoral Fellowship "RaCoCoLe".

\section{More background on persistence diagrams}
\subsection{Persistent homology of growing simplicial complexes}\label{SecCech}

Let $S$ be a finite set of points in $\mathbb{R}^m$. We would like to make a formal sense of the question: What is the shape of the set $S$? One possibility is to choose a radius $r\ge 0$ and consider the set $S_r\subset \mathbb{R}^m$ defined as the union of the $r$-balls centered around the points of $S$, then try to understand the homological properties of this set $S_r$. Of course, we should choose the radius~$r$ carefully. If $r$ is too small, then $S_r$ is just a disjoint union of balls. If $r$ is too large, then $S_r$ is contractible. To overcome the problem of the choice of the radius $r$, we can just consider all the possible choices of $r$, and try to understand the evolution of $S_r$ as $r$ grows. Given any $1\le d\le m-1$, as we increase $r$, new $d$-dimensional holes can be born in $S_r$, while other holes might die, when they are filled in. We can use tools from the theory of \textbf{persistent homology} to capture the changes of the homological features of~$S_r$. In particular, \textbf{persistence diagrams} can be used to record the birth and death times of the $d$-dimensional holes (homology classes). Instead of giving a formal construction, we provide an example in Figure~\ref{Figure1}. A few paragraphs below, we provide a definition in terms of persistent Betti numbers, but we think that this definition is less illuminating than Figure~\ref{Figure1}.

\def\innercolor{red!30}
\def\outercolor{red!18}
\def\drawcoloe{red!20}
\def\points{
\begin{scope}[blend group=multiply]
\draw[even odd rule,inner color=\innercolor,outer color=\outercolor, draw=\drawcoloe](0,0) circle (\sugar);
\draw[even odd rule,inner color=\innercolor,outer color=\outercolor, draw=\drawcoloe](-0.5,-0.6) circle (\sugar);
\draw[even odd rule,inner color=\innercolor,outer color=\outercolor, draw=\drawcoloe](-0.6,0.6) circle (\sugar);
\draw[even odd rule,inner color=\innercolor,outer color=\outercolor, draw=\drawcoloe](-1.3,0.1) circle (\sugar);%1.3
\draw[even odd rule,inner color=\innercolor,outer color=\outercolor, draw=\drawcoloe](-0.1,1.7) circle (\sugar);
\draw[even odd rule,inner color=\innercolor,outer color=\outercolor, draw=\drawcoloe](0.9,1.8) circle (\sugar);
\draw[even odd rule,inner color=\innercolor,outer color=\outercolor, draw=\drawcoloe](2.1,0.9) circle (\sugar);
\draw[even odd rule,inner color=\innercolor,outer color=\outercolor, draw=\drawcoloe](2,0) circle (\sugar);
\draw[even odd rule,inner color=\innercolor,outer color=\outercolor, draw=\drawcoloe](1,-0.3) circle (\sugar);
\draw[even odd rule,inner color=\innercolor,outer color=\outercolor, draw=\drawcoloe](2.6,-1.6) circle (\sugar);
\draw[even odd rule,inner color=\innercolor,outer color=\outercolor, draw=\drawcoloe](-0.8,-1.9) circle (\sugar);
\draw[even odd rule,inner color=\innercolor,outer color=\outercolor, draw=\drawcoloe](0.6,-2.95) circle (\sugar);
\draw[even odd rule,inner color=\innercolor,outer color=\outercolor, draw=\drawcoloe](1.4,-2.85) circle (\sugar);
\end{scope}
\filldraw[black](0,0) circle (2pt);
\filldraw[black](-0.5,-0.6) circle (2pt);
\filldraw[black](-0.6,0.6) circle (2pt);
\filldraw[black](-1.3,0.1) circle (2pt);
\filldraw[black](-0.1,1.7) circle (2pt);
\filldraw[black](0.9,1.8) circle (2pt);
\filldraw[black](2.1,0.9) circle (2pt);
\filldraw[black](2,0) circle (2pt);
\filldraw[black](1,-0.3) circle (2pt);
\filldraw[black](2.6,-1.6) circle (2pt);
\filldraw[black](-0.8,-1.9) circle (2pt);
\filldraw[black](0.6,-2.95) circle (2pt);
\filldraw[black](1.4,-2.85) circle (2pt);

\draw[white, very thick] (-2.8,-4.38) rectangle (4.2,3.5);
}

\begin{figure*}
 \centering
 \begin{subfigure}[\hspace{-8pt}Only disjoint balls.]{%
 \def\sugar{0.3}
\resizebox{0.22\linewidth}{!}{
\begin{tikzpicture}
\points
\end{tikzpicture}}
 }\end{subfigure}\hspace{0cm}
\begin{subfigure}[\hspace{-8pt}The purple cycle is born.]{%
 \def\sugar{0.514}%0.515
\resizebox{0.22\linewidth}{!}{
\begin{tikzpicture}
\points
\draw[purple, ultra thick] (0,0) -- (-0.5,-0.6) -- (-1.3,0.1) -- (-0.6,0.6)-- cycle;
\end{tikzpicture}}
 }
\end{subfigure}\hspace{0cm}
\begin{subfigure}[\hspace{-8pt}The purple cycle dies.]{%
 \def\sugar{0.6}%{0.54}
\resizebox{0.22\linewidth}{!}{
\begin{tikzpicture}
\points
\end{tikzpicture}}
 }\end{subfigure}\hspace{0cm}
\begin{subfigure}[\hspace{-8pt}The blue cycle is born.]{%
 \def\sugar{0.74}
\resizebox{0.22\linewidth}{!}{
\begin{tikzpicture}
\points
\draw[blue, ultra thick] (0,0) -- (-0.6,0.6)-- (-0.1,1.7)--(0.9,1.8)--(2.1,0.9)--(2,0)--(1,-0.3) -- cycle;
\end{tikzpicture}}
 }\end{subfigure}
\\
 \begin{subfigure}[\hspace{-8pt}The green cycle is born.]{%
 \def\sugar{0.855}
\resizebox{0.22\linewidth}{!}{
\begin{tikzpicture}
\points
\draw[blue, ultra thick] (0,0) -- (-0.6,0.6)-- (-0.1,1.7)--(0.9,1.8)--(2.1,0.9)--(2,0)--(1,-0.3) -- cycle;

\draw[green, ultra thick] (0,0) -- (-0.5,-0.6)--(-0.8,-1.9)-- (0.6,-2.95)--(1.4,-2.85)--(2.6,-1.6)--(2,0)--(1,-0.3) -- cycle;
\end{tikzpicture}}
 }\end{subfigure}\hspace{0cm}
\begin{subfigure}[\hspace{-8pt}The blue cycle dies.]{%
 \def\sugar{1.05}
\resizebox{0.22\linewidth}{!}{
\begin{tikzpicture}
\points
\draw[green, ultra thick] (0,0) -- (-0.5,-0.6)--(-0.8,-1.9)-- (0.6,-2.95)--(1.4,-2.85)--(2.6,-1.6)--(2,0)--(1,-0.3) -- cycle;
\end{tikzpicture}}
 }\end{subfigure}\hspace{0cm}
\begin{subfigure}[\hspace{-8pt}The green cycle dies.]{%
 \def\sugar{1.35}
\resizebox{0.22\linewidth}{!}{
\begin{tikzpicture}
\points
\end{tikzpicture}}
 }\end{subfigure}\hspace{0cm}
\begin{subfigure}[\hspace{-8pt}The persistence diagram]{%
 \def\sugar{0.74}
\resizebox{0.22\linewidth}{!}{
\begin{tikzpicture}

\filldraw[purple](0.515,0.6) circle (1.2pt);
\filldraw[blue](0.74,1.05) circle (1.2pt);
\filldraw[green](0.855,1.35) circle (1.2pt);
\draw[gray, thin] (0,0) -- (1.5,1.5);
\draw[->,black, thin] (0,0) -- (0,1.5) node[anchor=south]{\scalebox{.4}{death}};
\draw[->,black, thin] (0,0) -- (1.5,0) node[anchor=north]{\scalebox{.4}{birth}};; 

\end{tikzpicture}}
 }\end{subfigure}

 \caption{The persistence diagram above displays the birth and death times of the $1$-dimensional homology classes.}
 \label{Figure1}
 \end{figure*}

Persistent homology was introduced by Edelsbrunner, Letscher and Zomorodian~\cite{edelsbrunner2002topological}. Their definition was motivated by the results of Delfinado, Edelsbrunner, Kirkpatrick, M\"ucke and Seide on alpha shapes and related computational problems~\cite{alpha1,alpha2,alpha3}. Similar ideas appeared earlier in the work of Robins~\cite{robins1999towards} on finite point-set approximations of compact metric spaces, and in the theory of size functions by Frosini~\cite{frosini1990distance}. Persistent homology is also related to spectral sequences originally introduced by Leray~\cite{leray1946structure}. To highlight a connection to Morse-theory, we mention the following observation: For a smooth function $f:M\to \mathbb{R}$ on a sufficiently nice space $M$, consider the superlevel sets $A_u=f^{-1}([u,\infty))$. As we increase $u$, changes in the homology of the superlevel sets $A_u$ correspond to critical points of $f$. See also \cite{adler2010persistent,weinberger2011persistent}.

Returning to the time evolution of the sets $S_r$, \textbf{\v{C}ech complexes} provide useful combinatorial tools to understand the homology of~$S_r$. The \v{C}ech complex $C(r)$ is defined as the simplicial complex on the vertex set~$S$, where a subset $\sigma$ of $S$ is present in the complex~$C(r)$ if and only if the $r$-balls centered around the points of $\sigma$ have a nonempty intersection. Note that $(C(r))_{r\ge 0}$ is a process of growing simplicial complexes. By the nerve lemma, the \v{C}ech complex~$C(r)$ is homotopy equivalent to~$S_r$. Thus, the homology of $C(r)$ is the same as the homology of $S_r$. 

Fix a dimension $i$ and a field $F$. Given $r,s\in \mathbb{R}_+$, the $i$-dimensional persistent Betti number $\beta_i^{r,s}$ is defined as
\[\beta_i^{r,s}=\dim\frac{Z_i(C(r),F)}{Z_i(C(r),F)\cap B_i(C(s),F)}.\]

Heuristically, $\beta_i^{r,s}$ counts the number of $i$-dimensional holes that were born before time~$r$ and still present after time $s$. Usually, we are interested in the case $r\le s$, but the definition makes sense even for $r>s$.

The i-dimensional verbose persistence diagram of $(C(r))_{r\ge 0}$ is defined to be the unique multiset\footnote{Note that there are other settings when we should allow death times to be $\infty$, but for the purposes of this paper, we can ignore this detail.}\[\mathcal{P}\subset \overline{\Delta}=\{(b,d)\in \mathbb{R}_{\ge 0}\,:\,b\le d\}\] such that 
\begin{equation}\label{fundamental}\beta_i^{r,s}=|\mathcal{P}\cap ([0,r]\times(s,\infty))|\text{ for all }r,s\in\mathbb{R}_+.\end{equation}
The persistence diagram is obtained from the verbose persistence diagram by discarding the points of the form $(x,x)$. Observe that \eqref{fundamental} remains true if we replace the verbose persistence diagram with the persistence diagram under the additional assumption that~$r\le s$. 

One can see that the persistence diagram constructed in Lemma~\ref{lemmadef} is analogous to the construction above. As Figure~\ref{Figure1} suggests, one can also assign a cycle to each point of the persistence diagram. This is also analogous to the system of vectors $(v_p)_{p\in \mathcal{P}}$ discussed in Lemma~\ref{lemmadef}.

See \cite{hiraoka2018limit,bobrowski2022homological,bobrowski2017maximally,bobrowski2023universal,bobrowski2024universality,edelsbrunner2024maximum} for results on the homology of random \v{C}ech complexes, and see~\cite{hiraoka2017minimum,hino2019asymptotic,meszaros2025persistent} for results on the persistent homology of other random simplicial complexes.

\subsection{Persistence modules}\label{SecModule}

A \textbf{persistence module} is a pair $\left((V_t)_{t\in \mathbb{R}_+},(\varrho_{s,t})_{0\le s\le t}\right)$, where $V_t$ is a finite dimensional vector space over some fixed field $F$, and $\varrho_{s,t}:V_s\to V_t$ is a linear map, these maps are required to satisfy that $\varrho_{s,t}\circ\varrho_{r,s}=\varrho_{r,t}$ for all $r\le s\le t$, and $\varrho_{t,t}=\id$ for all $t$. 

The simplest examples of persistence modules are given by interval modules: Given a subinterval $I$ of $\mathbb{R}_+$, the corresponding interval module $\left((V^I_t)_{t\in \mathbb{R}_+},(\varrho^I_{s,t})_{0\le s\le t}\right)$ is defined as follows:
\begin{align*}V_t^I&=\begin{cases} F&\text{ if }t\in I,\\
0&\text{ otherwise,}\end{cases}\\
\varrho^I_{s,t}&=\begin{cases}
 \id&\text{ if }s,t\in I,\\
 0&\text{ otherwise.}
\end{cases}
\end{align*}

Each persistence module can be decomposed as the direct sum of interval modules~\cite{crawley2015decomposition}.

Let $(K(r))_{t\ge 0}$ be a process of growing simplicial complexes such as the \v{C}ech complexes considered in Section~\ref{SecCech}. Fix a dimension $i$. Then for any $s\le t$, one can define a linear map $\varrho_{s,t}:H_i(K(s),F)\to H_i(K(t),F)$ by setting $\varrho_{s,t}(z+B_i(K(s),F))=z+B_i(K(t),F)$. Then $\left(\left(H_i(K(t),F)\right)_{0\le t},\left(\varrho_{s,t}\right)_{0\le s\le t}\right)$ is a persistence module. Decompose this module as the direct sum of interval module, and let $\mathcal{I}$ be the multiset of the intervals that appear in this decomposition. Then $(\inf I,\sup I)_{I\in\mathcal{I}}$ gives the persistence diagram of the process $(K(t))_{t\ge 0}$. Thus, we obtain another perspective on persistence diagrams.

One can also consider persistence modules indexed by $\mathbb{Z}_+$, and everything above remains true. Given a sequence $(Z_t,B_t)_{t\in \mathbb{Z}_+}$ of pairs of finite dimensional subspaces of some vector space satisfying the containment requirements in \eqref{eqcontainment}, one can consider the persistence module $((Z_t/B_t)_{t\in\mathbb{Z}_+},(\varrho_{s,t})_{1\le s\le t})$, where similarly to the case above we define $\varrho_{s,t}(z+B_s)=z+B_t$. In particular, the persistence diagram given in Lemma~\ref{lemmadef} corresponds to the decomposition of the corresponding persistence module as a direct sum of interval modules.

\section{The proof of Lemma~\ref{lemmadef} and an algorithm for sampling the persistence diagram}

\subsection{The first steps towards constructing the persistence diagram}\label{SecFirstSteps}

% \begin{lemma}\label{lemmaperind}
% For any nonnegative integer $n$, there exist $\mathcal{P}_n\subset [1,n]^2\cap \overline{\Delta}$ and a basis $(v_p)_{p\in \mathcal{P}_n}$ of $B_n$ such that 
% \begin{itemize}
% \item for all $r,s\in \mathbb{Z}_+$ where $s\le n$, the vectors $\{v_p\,:\,p\in\mathcal{P}_n\cap([1,r]\times [1,s]) \}$ form a basis of $Z_r\cap B_s$;
% \item for all $p=(b,d)\in\mathcal{P}_n$, we have $b(v_p)=b$ and $d(v_p)=d$.
% \end{itemize}

% Moreover, if $\dim B_{n}=\dim B_{n-1}$, we can choose $\mathcal{P}_n=\mathcal{P}_{n-1}$ and use the same basis. If $\dim B_{n}=\dim B_{n-1}+1$, $\mathcal{P}_n$ can be obtained from $\mathcal{P}_{n-1}$ by adding a point of the form $p=(b_n,n)$ and extending the basis by a single vector $v_p$.

% \end{lemma}

% \begin{proof}

For $r\in \mathbb{Z}_{\ge 0}$, let $\Pi_r:\Finf\to\Finf$ be the projection defined by
\[(\Pi_r v)_i=\begin{cases}
v_i&\text{ if $i>r$,}\\
0&\text{ if $i\le r$.}
\end{cases}\]

% For $r,s\in \mathbb{Z}_{\ge 0}$, let us define the random matrix $L_{(r,s]}=\left(L_{(r,s]}(i,j)\right)_{i,j\in\mathbb{Z}_+}$ by
%\[L_{(r,s]}(i,j)=\begin{cases}
% 0&\text{ if $i>s$ or $j\le r$}\\
% L(i,j)&\text{ otherwise.}
% \end{cases}\]

% Note that if $r\ge s$, then $L_{(r,s]}$ is the all zero matrix. Let
%\[R(r,s)=\rang(L_{(r,s]})=\dim \RowS(L_{(r,s]}).\]
% Observe that if $r\ge s$, then $R(r,s)=0$. Moreover, $R(0,n)=\dim B_n$. 

Applying the rank-nullity theorem to the map $\Pi_r$ restricted to $B_s$, we get that

\begin{equation}\label{ZrcapBs}
 \dim Z_r\cap B_s=\dim\ker (\Pi_r\restriction B_s)=\dim B_s-\dim \Pi_r B_s.
\end{equation}

Next, for $r,s\in \mathbb{Z}_+$, we define
\begin{equation}\label{eqAdef}A(r,s)=\dim\Pi_{r-1}B_s-\dim\Pi_r B_s.\end{equation}
Since $\dim\Pi_r B_s=0$ for all $r\ge s$, we see that
\[A(r,s)=0\text{ whenever }r>s.\]
Due to this, when we are discussing algorithms to generate $A$, we often omit the step that we should set $A(r,s)=0$ for all $r>s$, and we only concentrate on the entries where $r\le s$.

By applying the rank-nullity theorem for the map $\Pi_r$ restricted to $\Pi_{r-1}B_s$, we see that
\[A(r,s)=\dim ((\Pi_{r-1} B_s)\cap \ker \Pi_r).\] 
Combining this with the fact that
\[(\Pi_{r-1} B_s)\cap \ker \Pi_r\subset \{v\in \Finf\,:\, v_i=0\text{ for all }i\neq r\},\]
we see that
\[A(r,s)\in \{0,1\}.\]

Let $z_n$ be the $n$-th row of $L$. Since $\Pi_r B_n$ is generated by the elements of $\Pi_r B_{n-1}$ and~$\Pi_r z_n$, we see that
\begin{equation}\label{eqdimPirB0}\dim\Pi_r B_n=\begin{cases}
 \dim \Pi_r B_{n-1}&\text{ if $\Pi_r z_n\in \Pi_r B_{n-1}$,}\\
 \dim \Pi_r B_{n-1}+1&\text{ if $\Pi_r z_n\notin \Pi_r B_{n-1}$.}
\end{cases}\end{equation}

Let \begin{equation}\label{bidef2}b_n=\max\{i\in\mathbb{Z}_+\,:\,\Pi_{i-1}z_n\notin \Pi_{i-1}B_{n-1}\}.\end{equation}
We will see later that this definition is consistent with the one given in~\eqref{eqbidef}.

By the definition of $b_n$, if $r\ge b_n$, then $\Pi_r z_n\in \Pi_r B_{n-1}$. 

We claim that if $r<b_n$, then $\Pi_r z_n\notin \Pi_r B_{n-1}$. We prove this statement by contradiction, so assume that $r<b_n$ and $\Pi_r z_n\in \Pi_r B_{n-1}$. Then $\Pi_{b_n-1}\Pi_r z_n\in \Pi_{b_n-1}\Pi_r B_{n-1}$. Observing that~$\Pi_{b_n-1}\Pi_r=\Pi_{b_n-1}$, it follows that $\Pi_{b_n-1}z_n\in \Pi_{b_n-1}B_{n-1}$, which is a contradiction. Combining these with~\eqref{eqdimPirB0}, we see that

\begin{equation}\label{eqdimPirB}\dim\Pi_r B_n=\begin{cases}
 \dim \Pi_r B_{n-1}&\text{ if $r\ge b_n$,}\\
 \dim \Pi_r B_{n-1}+1&\text{ if $r< b_n$.}
\end{cases}\end{equation}

Thus, if $r<b_n$, then
\[A(r,n)=\dim\Pi_{r-1}B_n-\dim\Pi_r B_n=(\dim\Pi_{r-1}B_{n-1}+1)-(\dim\Pi_r B_{n-1}+1)=A(r,n-1).\]
Furthermore, if $r>b_n$, then
\[A(r,n)=\dim\Pi_{r-1}B_n-\dim\Pi_r B_n=\dim\Pi_{r-1}B_{n-1}-\dim\Pi_r B_{n-1}=A(r,n-1).\]
Finally, assuming that $b_n\neq-\infty$, we have
\[A(b_n,n)=\dim\Pi_{b_n-1}B_n-\dim\Pi_{b_n} B_n=(\dim\Pi_{b_n-1}B_{n-1}+1)-\dim\Pi_{b_n} B_{n-1}=A(b_n,n-1)+1.\]
Since $A(b_n,n),A(b_n,n-1)\in \{0,1\}$, this is only possible if
\[A(b_n,n-1)=0\quad\text{ and }\quad A(b_n,n)=1.\]

We define $\mathcal{P}$ as
\[\mathcal{P}=\{(b_n,n)\,:\,n\in\mathbb{Z}_+\text{ and }b_n\neq -\infty\}.\]

From now on, $\mathcal{P}$ will mean the set constructed as above. Later we will see that this~$\mathcal{P}$ satisfies all the properties given in Lemma~\ref{lemmadef}, and it is uniquely determined by these properties.

We see that the two definitions of $b_i$ given in \eqref{eqbidef} and \eqref{bidef2} are consistent.

Consider an $n\in \mathbb{Z}_+$ such that $b_n\neq -\infty$. Let $p_n=(b_n,n)$. By the definition of $b_n$, we see that $\Pi_{b_n} z_n\in \Pi_{b_n} B_{n-1}$. Thus, there is a vector $z_n'\in B_{n-1}$ such that $\Pi_{b_n} z_n'=\Pi_{b_n}z_n$. Let\[v_{p_n}=z_n-z_n'.\] \begin{lemma}\label{banddgood}
We have $b(v_{p_n})=b_n$ and $d(v_{p_n})=n$.
\end{lemma}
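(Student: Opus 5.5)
We verify the two equalities separately, working directly from the definitions of $b_n$ in \eqref{bidef2} and of $z_n'$.

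\textbf{Birth time.} By construction $\Pi_{b_n}v_{p_n}=\Pi_{b_n}z_n-\Pi_{b_n}z_n'=0$, so all coordinates of $v_{p_n}$ with index greater than $b_n$ vanish. Hence $v_{p_n}\in Z_{b_n}$ and $b(v_{p_n})\le b_n$.

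For the reverse inequality, suppose $v_{p_n}\in Z_{b_n-1}$. Then $\Pi_{b_n-1}v_{p_n}=0$, which means $\Pi_{b_n-1}z_n=\Pi_{b_n-1}z_n'$. Since $z_n'\in B_{n-1}$, this gives $\Pi_{b_n-1}z_n\in\Pi_{b_n-1}B_{n-1}$, contradicting the definition of $b_n$ as an index with $\Pi_{b_n-1}z_n\notin\Pi_{b_n-1}B_{n-1}$. So $b(v_{p_n})=b_n$. This also shows $v_{p_n}\neq 0$, so the birth and death times are defined. When $b_n=1$, membership in $Z_0=\{0\}$ is excluded by the same argument, since $\Pi_0$ is the identity.

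\textbf{Death time.} Since $z_n\in B_n$ and $z_n'\in B_{n-1}\subseteq B_n$, we have $v_{p_n}\in B_n$, so $d(v_{p_n})\le n$.

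Now suppose $v_{p_n}\in B_{n-1}$. Then $z_n=v_{p_n}+z_n'\in B_{n-1}$, so $\Pi_{r}z_n\in\Pi_rB_{n-1}$ for every $r$. In particular this holds for $r=b_n-1$, again contradicting the definition of $b_n$. (Here we use $b_n\neq-\infty$, so the set in \eqref{bidef2} contains $b_n$.) Therefore $d(v_{p_n})=n$.

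No step is a real obstacle. The only subtlety is remembering that the defining property of $b_n$ is the non-membership at index $b_n-1$. Both the lower bound on the birth time and the lower bound on the death time reduce to that single fact, which is exactly why the argument uses the maximality in \eqref{bidef2} and not merely membership at $b_n$.
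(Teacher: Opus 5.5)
Your proof is correct and follows the same route as the paper. You show $v_{p_n}\in Z_{b_n}\setminus Z_{b_n-1}$ using $\Pi_{b_n-1}z_n\notin\Pi_{b_n-1}B_{n-1}$, and $v_{p_n}\in B_n\setminus B_{n-1}$ using $z_n'\in B_{n-1}$; unlike the paper, which simply asserts $z_n\notin B_{n-1}$, you also justify that step from $b_n\neq-\infty$.
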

\begin{proof}
Observe that $v_{p_n}\in\ker \Pi_{b_n}=Z_{b_n}$. It follows from the definition of $b_n$ given in \eqref{bidef2} that $\Pi_{b_n-1} z_n\notin \Pi_{b_n-1} B_{n-1}$. In particular, $\Pi_{b_n-1} z_n\neq \Pi_{b_n-1} z_n'$, so $v_{p_n}\notin \ker \Pi_{b_n-1}=Z_{b_n-1}$. Therefore, $v_{p_n}\in Z_{b_n}\setminus Z_{b_n-1}$, which implies that $b(v_{p_n})=b_n$. Since $z_n\in B_n\setminus B_{n-1}$, we see that $v_{p_n}\in B_n\setminus B_{n-1}$, so $d(v_{p_n})=n$. 
\end{proof}
%Since $v_{p_n}\notin B_{n-1}$, it is obviously linearly independent from $\{v_{p}\,:\,p\in \mathcal{P}_{n-1}\}$. Since $\dim B_n=\dim B_{n-1}+1$, 
\begin{lemma}\label{lemmadef0ind}\hfill
\begin{enumerate}[(a)]
\item\label{lemmdef0parta}For all $n$, the vectors $\{v_{p}\,:\,p\in \mathcal{P}\cap [1,n]^2\}$ form a basis of $B_n$. 
\item\label{lemmdef0partb}For all $r,n\in\mathbb{Z}_+$, the vectors $\{v_{p}\,:\,p\in \mathcal{P}\cap [1,r]\times [1,n]\}$ form a basis of $Z_r\cap B_n$.
\end{enumerate}
\end{lemma}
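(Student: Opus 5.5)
Part (a) goes by induction on $n$, with the trivial case $B_0=\{0\}$ and $\mathcal{P}\cap[1,0]^2=\emptyset$. For the inductive step, compare $B_n$ with $B_{n-1}$. If $b_n=-\infty$, then by \eqref{bidef2} we have $\Pi_0 z_n\in\Pi_0 B_{n-1}$, that is, $z_n\in B_{n-1}$. Hence $B_n=B_{n-1}$, and no point with death coordinate $n$ is added, so $\mathcal{P}\cap[1,n]^2=\mathcal{P}\cap[1,n-1]^2$ and the inductive hypothesis gives the claim. (Note that $b_i\le i$ always, since $\Pi_{i}z_i=0$; so the points $(b_i,i)$ with $i\le n$ are exactly the points of $\mathcal{P}\cap[1,n]^2$.) If $b_n\neq-\infty$, then $z_n\notin B_{n-1}$. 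Since $z'_n\in B_{n-1}$, we get $v_{p_n}=z_n-z'_n\in B_n\setminus B_{n-1}$, and $B_n=B_{n-1}+\mathrm{span}(z_n)=B_{n-1}+\mathrm{span}(v_{p_n})$. Adjoining $v_{p_n}$ to the basis of $B_{n-1}$ therefore gives a basis of $B_n$.

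For part (b), fix $r$ and $n$. Linear independence of $\{v_p : p\in\mathcal{P}\cap([1,r]\times[1,n])\}$ is inherited from part (a). By Lemma~\ref{banddgood}, each such $v_p$ satisfies $b(v_p)\le r$ and $d(v_p)\le n$, so it lies in $Z_r\cap B_n$. It remains to show spanning, and for this I would use a leading-coordinate (pivot) argument.

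Take $w\in Z_r\cap B_n$ and expand it in the basis from (a): $w=\sum_{p\in\mathcal{P}\cap[1,n]^2}c_pv_p$. Suppose some $c_p\neq0$ with $p=(b,d)$ and $b>r$. Among these, choose $p$ with $b$ maximal. By part (4)(b)-style injectivity, the birth coordinates $b_i$ of distinct points are distinct, so this $b$ is attained by a single point. This injectivity follows from the construction: if $b_m=b_n=b$ with $m<n$, then $A(b,m)=1$ persists to $A(b,n-1)=1$ by the update rules, contradicting $A(b_n,n-1)=0$.

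Each $v_{p'}$ lies in $Z_{b'}$ and has a nonzero $b'$-th coordinate, because $b(v_{p'})=b'$. So the $b$-th coordinate of $w$ receives a nonzero contribution from $v_p$ and no contribution from any other $v_{p'}$ with $c_{p'}\neq0$:
\begin{itemize}
\item those with $b'<b$ vanish at coordinate $b$;
\item those with $b'>b$ and $b'>r$ have coefficient $0$ by the maximality of $b$;
\item those with $b'\le r<b$ also vanish at coordinate $b$.
\end{itemize}
Hence $w_b\neq0$ with $b>r$, contradicting $w\in Z_r$. Therefore only points with $b\le r$ occur in the expansion, which proves spanning.

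The main obstacle is the injectivity of $n\mapsto b_n$ on $\{n: b_n\neq-\infty\}$. Without it, two basis vectors could share a pivot and the triangularity argument would fail. I would establish it first, as a short claim derived from the recurrences for $A(r,n)$ stated above.
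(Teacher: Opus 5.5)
Your proof is correct. Part (a) is essentially the paper's argument: the paper uses $\dim B_n=\dim B_{n-1}+1$, while you use $B_n=B_{n-1}+\mathrm{span}(v_{p_n})$.

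For part (b) you take a genuinely different route.

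\textbf{The paper's route.} It never needs distinct birth coordinates at this stage. It continues the induction on $n$ with a dimension count. From $\dim Z_r\cap B_n=\dim B_n-\dim \Pi_rB_n$ in \eqref{ZrcapBs} together with \eqref{eqdimPirB}, it reads off that $\dim Z_r\cap B_n$ exceeds $\dim Z_r\cap B_{n-1}$ by one exactly when $b_n\neq-\infty$ and $b_n\le r$. Hence $\dim Z_r\cap B_n=|\mathcal{P}\cap([1,r]\times[1,n])|$. Independence (from (a)) plus containment (from Lemma~\ref{banddgood}) then gives a basis.

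\textbf{Your route.} You show directly that the family $(v_p)$ is in echelon form with respect to the coordinate filtration. Each $v_p$ has its last nonzero coordinate at its birth time, and these pivots are distinct. So any $w\in Z_r\cap B_n$ must have zero coefficients on all vectors whose pivot exceeds $r$. The price is the injectivity of $n\mapsto b_n$ on $\{n: b_n\neq-\infty\}$. You correctly derive this from the recurrences of Section~\ref{SecFirstSteps}: $A(r,n)=A(r,n-1)$ for $r\neq b_n$, and $A(b_n,n-1)=0$, $A(b_n,n)=1$. Hence $A(b,\cdot)$ never returns to $0$ once it becomes $1$.

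\textbf{What each buys.} As a by-product, your argument establishes the uniqueness half of part (4)(a) of Lemma~\ref{lemmadef} already here, deterministically. The paper only records this later, in Lemma~\ref{lemmafinitedeath}, via the same monotonicity of $A(i,\cdot)$. The paper's count, in turn, yields the dimension formula of part (3)(a) directly, without any structural input about pivots. In your approach that formula comes out as a corollary.

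\textbf{One labelling correction.} The injectivity you need concerns birth coordinates, i.e.\ the uniqueness in part (4)(a). It is not part (4)(b), which is about death coordinates and holds trivially by construction, since each $n$ contributes at most the single point $(b_n,n)$.
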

\begin{proof}
We prove by induction on $n$. We first start with the case $b_n\neq-\infty$. Let $p_n=(b_n,n)$.

By the induction hypothesis, the vectors $\{v_p\,:\, p\in\mathcal{P}\cap [1,n-1]^2\}$ form a basis of~$B_{n-1}$. Combining this with the facts that $v_{p_n}\in B_n\setminus B_{n-1}$ and $\dim B_n=\dim B_{n-1}+1$, we see that the vectors $\{v_{p}\,:\,p\in \mathcal{P}\cap [1,n]^2\}$ form a basis of $B_n$. This finishes the proof of part~\eqref{lemmdef0parta}, we move on to part~\eqref{lemmdef0partb}.

If $r<b_n$, then 
\begin{align*}\dim& Z_r\cap B_n&\\&=\dim B_n-\dim \Pi_r B_n&\text{(by \eqref{ZrcapBs})}\\&=(\dim B_{n-1}+1)-(\dim \Pi_r B_{n-1}+1)&\text{(by \eqref{eqdimPirB})}\\&=\dim B_{n-1}-\dim \Pi_r B_{n-1}&\\&=\dim Z_r\cap B_{n-1}&\text{(by \eqref{ZrcapBs})}\\&=|\mathcal{P}\cap ([1,r]\times [1,n-1])|&\text{(by induction)}\\&=|\mathcal{P}\cap ([1,r]\times [1,n])|&\text{(by the fact that $p_n\notin [1,r]\times [1,n])$}.\end{align*}

If $r\ge b_n$, we have
\begin{align*}\dim& Z_r\cap B_n&\\&=\dim B_n-\dim \Pi_r B_n&\text{(by \eqref{ZrcapBs})}\\&=(\dim B_{n-1}+1)-\dim \Pi_r B_{n-1}&\text{(by \eqref{eqdimPirB})}\\&=\dim Z_r\cap B_{n-1}+1&\text{(by \eqref{ZrcapBs})}\\&=|\mathcal{P}\cap ([1,r]\times [1,n-1])|+1&\text{(by induction)}\\&=|\mathcal{P}\cap ([1,r]\times [1,n])|&\text{(by the fact that $p_n\in [1,r]\times [1,n])$}.\end{align*}

Given $r\in\mathbb{Z}_+$, consider the set of vectors $\{v_p\,:\,p\in\mathcal{P}\cap ([1,r]\times [1,n])\}$. These vectors are linearly independent by part~\eqref{lemmdef0parta}, and they are contained in $Z_r\cap B_n$ by Lemma~\ref{banddgood}. Since the number of vectors matches the dimension of $Z_r\cap B_n$, it follows that they form a basis of $Z_r\cap B_n$. This proves part~\eqref{lemmdef0partb}.

Finally, if $b_n=-\infty$, the statement follows easily from the fact that in this case, we have $B_n=B_{n-1}$ and $Z_r\cap B_n=Z_r\cap B_{n-1}$ for all $r$.
\end{proof}

\subsection{Sampling the persistence diagram}

We define a procedure which we call \textsf{NextLine}. It takes a vector $v\in \{0,1\}^{n-1}$ as an input, and gives back a pair $(w,i)$ as an output, where $w\in \{0,1\}^{n}$ and $i\in\{1,2\dots,n\}\cup\{-\infty\}$. The output also depends on some additional randomness, which is provided by a coin which comes up heads with probability $\frac{q-1}q$. We assume that the coin tosses are independent. Given an input vector $v\in \{0,1\}^{n-1}$, let $u\in \{0,1\}^{n}$ be obtained from $v$ by appending it with a $0$. We go through the components of $u$ starting with $u_{n}$ and then moving from right to left. Whenever we encounter a component that is equal to $0$, we toss the coin and if it comes up heads then we stop, otherwise we continue moving to the left. Let us assume that we stopped at the $i$th component. If we went through all the components of $u$ without stopping, then we set $i=-\infty$. If $i\neq -\infty$, we output the pair $(w,i)$, where $w$ is obtained from $u$ by changing the $i$th component of $u$ to $1$ from $0$. If $i=-\infty$, we output $(u,-\infty)$. See Figure~\ref{Nexlinefigure}.

\begin{figure}[h!]
 
Example 1: 
\medskip

 \begin{tabular}{p{3.8cm} p{0.7cm} p{0.3cm} p{0.3cm} p{0.3cm} p{0.3cm} p{0.3cm} p{0.3cm} p{0.3cm} p{0.3cm} p{0.3cm} p{0.3cm} p{0.3cm} p{0.3cm} p{0.3cm} p{0.3cm} p{0.3cm} p{0.3cm} p{0.3cm}}
\cline{3-16}
The input vector $v$: &&\multicolumn{1}{|c|}{0}&\multicolumn{1}{|c|}{0}&\multicolumn{1}{|c}{1}&\multicolumn{1}{|c|}{1}&\multicolumn{1}{|c|}{0}&\multicolumn{1}{|c|}{0}&\multicolumn{1}{|c|}{0}&\multicolumn{1}{|c|}{1}&\multicolumn{1}{|c|}{1}&\multicolumn{1}{|c|}{0}&\multicolumn{1}{|c|}{1}&\multicolumn{1}{|c|}{0}&\multicolumn{1}{|c|}{1}&\multicolumn{1}{|c|}{0}&\multicolumn{1}{|c}{ }&&\\\cline{3-16}
&&\multicolumn{15}{c}{$\leftarrow\quad\leftarrow\quad\leftarrow\quad\leftarrow\quad$ Reading direction $\quad\leftarrow\quad\leftarrow\quad\leftarrow\quad\leftarrow$}\\
\cline{3-17}
The vector $u$: &&\multicolumn{1}{|c|}{0}&\multicolumn{1}{|c|}{0}&\multicolumn{1}{|c}{1}&\multicolumn{1}{|c|}{1}&\multicolumn{1}{|c|}{0}&\multicolumn{1}{|c|}{0}&\multicolumn{1}{|c|}{0}&\multicolumn{1}{|c|}{1}&\multicolumn{1}{|c|}{1}&\multicolumn{1}{|c|}{0}&\multicolumn{1}{|c|}{1}&\multicolumn{1}{|c|}{0}&\multicolumn{1}{|c|}{1}&\multicolumn{1}{|c|}{0}&\multicolumn{1}{|c|}{0}&\multicolumn{1}{|c}{ }&\\
\cline{3-17}
The coin tosses: &&&&&&&\multicolumn{1}{c}{H}&\multicolumn{1}{c}{T}&&&\multicolumn{1}{c}{T}&&\multicolumn{1}{c}{T}&&\multicolumn{1}{c}{T}&\multicolumn{1}{c}{T}&\\
\cline{3-17}
The output vector~$w$: &&\multicolumn{1}{|c|}{0}&\multicolumn{1}{|c|}{0}&\multicolumn{1}{|c}{1}&\multicolumn{1}{|c|}{1}&\multicolumn{1}{|c|}{0}&\multicolumn{1}{|c|}{\cellcolor{blue!20}1}&\multicolumn{1}{|c|}{0}&\multicolumn{1}{|c|}{1}&\multicolumn{1}{|c|}{1}&\multicolumn{1}{|c|}{0}&\multicolumn{1}{|c|}{1}&\multicolumn{1}{|c|}{0}&\multicolumn{1}{|c|}{1}&\multicolumn{1}{|c|}{0}&\multicolumn{1}{|c|}{0}&\multicolumn{1}{|c}{ }&\\\cline{3-17}
The output index $i$:&&&&&&&\multicolumn{1}{c}{6}\\
\end{tabular}

\bigskip

\bigskip

Example 2:

\medskip

\begin{tabular}{p{3.8cm} p{0.7cm} p{0.3cm} p{0.3cm} p{0.3cm} p{0.3cm} p{0.3cm} p{0.3cm} p{0.3cm} p{0.3cm} p{0.3cm} p{0.3cm} p{0.3cm} p{0.3cm} p{0.3cm} p{0.3cm} p{0.3cm} p{0.3cm} p{0.3cm}}
\cline{3-16}
The input vector $v$: &&\multicolumn{1}{|c|}{1}&\multicolumn{1}{|c|}{0}&\multicolumn{1}{|c}{1}&\multicolumn{1}{|c|}{0}&\multicolumn{1}{|c|}{0}&\multicolumn{1}{|c|}{0}&\multicolumn{1}{|c|}{1}&\multicolumn{1}{|c|}{1}&\multicolumn{1}{|c|}{0}&\multicolumn{1}{|c|}{1}&\multicolumn{1}{|c|}{0}&\multicolumn{1}{|c|}{1}&\multicolumn{1}{|c|}{0}&\multicolumn{1}{|c|}{1}&\multicolumn{1}{|c}{ }&&\\\cline{3-16}

&&\multicolumn{15}{c}{$\leftarrow\quad\leftarrow\quad\leftarrow\quad\leftarrow\quad$ Reading direction $\quad\leftarrow\quad\leftarrow\quad\leftarrow\quad\leftarrow$}\\\cline{3-17}
The vector $u$: &&\multicolumn{1}{|c|}{1}&\multicolumn{1}{|c|}{0}&\multicolumn{1}{|c|}{1}&\multicolumn{1}{|c|}{0}&\multicolumn{1}{|c|}{0}&\multicolumn{1}{|c|}{0}&\multicolumn{1}{|c|}{1}&\multicolumn{1}{|c|}{1}&\multicolumn{1}{|c|}{0}&\multicolumn{1}{|c|}{1}&\multicolumn{1}{|c|}{0}&\multicolumn{1}{|c|}{1}&\multicolumn{1}{|c|}{0}&\multicolumn{1}{|c|}{1}&\multicolumn{1}{|c|}{0}&\multicolumn{1}{|c}{ }&\\
\cline{3-17}
The coin tosses: &&&\multicolumn{1}{c}{T}&&\multicolumn{1}{c}{T}&\multicolumn{1}{c}{T}&\multicolumn{1}{c}{T}&&&\multicolumn{1}{c}{T}&&\multicolumn{1}{c}{T}&&\multicolumn{1}{c}{T}&&\multicolumn{1}{c}{T}&\\
\cline{3-17}
The output vector~$w$: &&\multicolumn{1}{|c|}{1}&\multicolumn{1}{|c|}{0}&\multicolumn{1}{|c|}{1}&\multicolumn{1}{|c|}{0}&\multicolumn{1}{|c|}{0}&\multicolumn{1}{|c|}{0}&\multicolumn{1}{|c|}{1}&\multicolumn{1}{|c|}{1}&\multicolumn{1}{|c|}{0}&\multicolumn{1}{|c|}{1}&\multicolumn{1}{|c|}{0}&\multicolumn{1}{|c|}{1}&\multicolumn{1}{|c|}{0}&\multicolumn{1}{|c|}{1}&\multicolumn{1}{|c|}{0}&\multicolumn{1}{|c}{ }&\\
\cline{3-17}
The output index $i$:&$-\infty$\\
\end{tabular}
\caption{Two sample input-output pairs for the procedure \textsf{NextLine}}
 \label{Nexlinefigure}
\end{figure}

The next lemma and its proof rely on the matrix $A$ defined in~\eqref{eqAdef}, the index $b_n$ defined in~\eqref{bidef2}, and their properties discussed in Section~\ref{SecFirstSteps}.

\begin{lemma}\label{lemmaNextLine}
 Let $v\in \{0,1\}^{n-1}$. Conditioned on the event that 
\[\left(A(1,n-1),A(2,n-1),\dots,A(n-1,n-1)\right)=v,\] the pair
 $\left(\left( A(1,n),A(2,n),\dots,A(n,n)\right),b_n\right)$ has the same distribution as the output \break of \textsf{NextLine}$(v)$. 
\end{lemma}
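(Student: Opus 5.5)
The plan is to condition on the whole $\sigma$-field $\mathcal{F}_{n-1}$ generated by the first $n-1$ rows of $L$, rather than only on the event in the statement. The subspace $B_{n-1}$, and hence the vector $(A(1,n-1),\dots,A(n-1,n-1))$, is $\mathcal{F}_{n-1}$-measurable. The new row $z_n$ is independent of $\mathcal{F}_{n-1}$: its coordinates $z_n(1),\dots,z_n(n)$ are i.i.d. uniform on $\mathbb{F}_q$ and its remaining coordinates vanish. So it suffices to show that for every fixed realization of $B_{n-1}$ with $A(\cdot,n-1)=v$, the pair $((A(r,n))_{r\le n},b_n)$ has the law of \textsf{NextLine}$(v)$. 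The result for the event then follows by averaging. The appended $0$ in $u$ corresponds to $A(n,n-1)=0$, which holds since $n>n-1$.

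The core step is to scan $r=n,n-1,\dots,1$ and track the events $E_r=\{\Pi_r z_n\in\Pi_r B_{n-1}\}$. By the definition \eqref{bidef2} and the monotonicity argument in Section~\ref{SecFirstSteps}, these events are nested: $E_{r-1}\subseteq E_r$, $E_n$ always holds since $\Pi_n z_n=0$, and $b_n$ is the largest $r$ with $E_{r-1}$ failing (or $-\infty$ if $E_0$ holds). Fix $r$ and condition additionally on $z_n(r+1),\dots,z_n(n)$ with $E_r$ holding; the coordinate $z_n(r)$ is still uniform and independent of this information. There are two cases.

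\begin{enumerate}[(i)]
\item If $A(r,n-1)=1$, then $(\Pi_{r-1}B_{n-1})\cap\ker\Pi_r$ is spanned by a nonzero multiple of $e_r$. Hence any $w\in\Pi_{r-1}B_{n-1}$ with $\Pi_r w=\Pi_r z_n$ can be corrected in coordinate $r$, so $E_{r-1}$ holds with probability $1$. This matches \textsf{NextLine} skipping the $1$ entries without a coin toss.
\item If $A(r,n-1)=0$, then $\Pi_r$ is injective on $\Pi_{r-1}B_{n-1}$. So there is a unique $w\in\Pi_{r-1}B_{n-1}$ with $\Pi_r w=\Pi_r z_n$, and $E_{r-1}$ holds if and only if $z_n(r)=w_r$. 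This has conditional probability exactly $1/q$, independently of everything revealed so far.
\end{enumerate}

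Thus the first failure, which is $b_n$, occurs at a zero entry of $u$, at each zero independently with probability $\frac{q-1}{q}$. This is exactly the coin-tossing rule of \textsf{NextLine}.

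Finally, the output row is identified using the facts already proved in Section~\ref{SecFirstSteps}: $A(r,n)=A(r,n-1)$ for $r\neq b_n$, while $A(b_n,n-1)=0$ and $A(b_n,n)=1$. If $b_n=-\infty$, then $B_n=B_{n-1}$ and the row is unchanged. This gives exactly the vector $w$ produced by \textsf{NextLine}.

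The main obstacle is the careful conditional-independence bookkeeping in the scan: in case (ii), the target value $w_r$ must depend only on $B_{n-1}$ and on $z_n(r+1),\dots,z_n(n)$, so that the coin tosses at successive zeros are genuinely independent. I would handle this by revealing the coordinates of $z_n$ one at a time, from right to left.
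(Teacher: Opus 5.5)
Your proposal is correct and follows essentially the same route as the paper. Like the paper, you reveal the coordinates of $z_n$ from right to left and show that, given $\Pi_r z_n\in\Pi_r B_{n-1}$, the next inclusion holds with probability $1$ if $A(r,n-1)=1$ and with probability $\frac1q$ if $A(r,n-1)=0$. Your additional bookkeeping fills in what the paper leaves as ``follows easily'': you condition on the full $\sigma$-field of the first $n-1$ rows, and you identify the output row explicitly via $A(r,n)=A(r,n-1)$ for $r\neq b_n$ and $A(b_n,n)=1$. As a side benefit, this makes explicit the Markov property that \textsf{SamplingA} relies on.
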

\begin{proof}
 Let $z_n$ be the $n$th row of $L$. We can write $z_n$ as $\sum_{j=1}^n \gamma_j e_j$, where $e_1,e_2,\dots$ is the standard basis of $\Finf$, and $\gamma_1,\dots,\gamma_n$ are i.i.d uniform random elements of $\mathbb{F}_q$. We will reveal the coefficients $\gamma_n,\gamma_{n-1},\dots,\gamma_1$ one-by-one in this order. Assume that we have already revealed $\gamma_n,\gamma_{n-1},\dots, \gamma_{i+1}$ and we have found that $\Pi_i z_n=\sum_{j=i+1}^n \gamma_j e_j\in \Pi_i B_{n-1}$. We distinguish two cases based on the value of $v_i=A(i,n-1)=\dim \Pi_{i-1} B_{n-1}-\dim \Pi_i B_{n-1}$ which is either $0$ or $1$. 
 \begin{itemize}
 \item If $v_i=1$, then $\beta e_i+\sum_{j=i+1}^n \gamma_j e_j\in  \Pi_{i-1} B_{n-1}$ for all choices of $\beta$. 
 \item If $v_i=0$, then there is a unique choice of $\beta$ such that $\beta e_i+\sum_{j=i+1}^n \gamma_j e_j\in  \Pi_{i-1} B_{n-1}$. 
 \end{itemize} 
 Thus,
\[\mathbb{P}\left(\sum_{j=i}^n \gamma_j e_j\in  \Pi_{i-1} B_{n-1}\,\Big|\,\sum_{j=i+1}^n \gamma_j e_j\in  \Pi_{i} B_{n-1}\right)=\begin{cases}
 1&\text{ if }v_i=1,\\
 \frac{1}q&\text{ if }v_i=0.
 \end{cases}\]
 The statement follows easily from this observation.
\end{proof}

Once we have Lemma~\ref{lemmaNextLine}, we can sample the matrix $A$ together with the sequence $b_1,b_2,\dots$ as follows. For all $n=1,2,\dots$, we set
\[\left(\left( A(1,n),A(2,n),\dots,A(n,n)\right),b_n\right)=\textsf{NextLine}(A(1,n-1),A(2,n-1),\dots,A(n-1,n-1)).\]

We refer to this algorithm as \textsf{SamplingA}. If we only sample $(A(i,j))_{1\le i\le j\le n}$ by running this algorithm until time $n$, we will use the notation \textsf{SamplingA}($n$).

Given $b_1,b_2,\dots$, we can obtain $\mathcal{P}$ as\[\mathcal{P}=\{(b_n,n)\,:\,n\in\mathbb{Z}_+, b_n\neq-\infty\}.\]

% The set $\mathcal{P}\cap [1,n]^2$ coincides with the set $\mathcal{P}_n$ given by Lemma~\ref{lemmaperind}. Thus, as a corollary of Lemma~\ref{lemmaperind}, we obtain the following lemma.
% \begin{lemma}\label{lemmadef0}
% There exist $\mathcal{P}\subset \overline{\Delta}$ and linearly independent vectors $(v_p)_{p\in \mathcal{P}}$ in $\Finf$ such that 
% \begin{itemize}
% \item for all $r,s\in \mathbb{Z}_+$, the vectors $\{v_p\,:\,p\in\mathcal{P}\cap([1,r]\times [1,s]) \}$ form a basis of $Z_r\cap B_s$;
% \item for all $p=(b,d)\in\mathcal{P}$, we have $b(v_p)=b$ and $d(v_p)=d$;
% \item for all $d$ there are at most one $b$ such that $(b,d)\in \mathcal{P}$.
% \end{itemize}
% Moreover, we can sample $\mathcal{P}$ using the \textsf{SamplingA} procedure. 
% \end{lemma}

Let 
\[N_n=|\{i\in \{1,2,\dots,n\}\,:\,A(i,n)=0\}|.\]
Looking at the algorithm \textsf{SamplingA}, one can see that $N_n$ is a Markov-chain, and
\begin{equation}\label{eqNntransition}\mathbb{P}(N_n=k\,|\,N_{n-1}=h)=\begin{cases} q^{-(h+1)}&\text{ if $k=h+1$},\\
1-q^{-(h+1)}&\text{ if $k=h$},\\
0&\text{ otherwise.}
\end{cases}\end{equation}
Moreover, the event $N_n=N_{n-1}+1$ coincides with the event that $b_n=-\infty$.

Let
\begin{align*}
T_0&=0,\\
T_i&=\min \{n\in \mathbb{Z}_+\,:\,N_n=i\}&\text{ for }i>0.
\end{align*}
Or equivalently, we can define $(T_i)$ by
\begin{align*}
T_0&=0,\\
T_i&=\min \{n>T_{i-1}\,:\,b_n=-\infty\}&\text{ for }i>0.
\end{align*}

The next lemma is a straightforward corollary of~\eqref{eqNntransition}.
\begin{lemma}\label{lemmaincrement}
 The increments $T_1-T_0,T_2-T_1,T_3-T_2,\dots$ are independent. Moreover, $T_i-T_{i-1}$ is a shifted geometric random variable with success probability $q^{-i}$.
\end{lemma}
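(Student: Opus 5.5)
The plan is to read everything off the Markov chain $(N_n)_{n\ge 0}$, with $N_0=0$, whose transition probabilities are given in \eqref{eqNntransition}. The chain is nondecreasing and moves only by steps of size $0$ or $1$. Hence $T_i$, the hitting time of level $i$, is the first time the chain jumps from level $i-1$ to level $i$, and $T_i-T_{i-1}$ is the holding time of the chain at level $i-1$.

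The first step is to identify the law of one holding time. From \eqref{eqNntransition} with $h=i-1$, while the chain sits at $i-1$, each step jumps to $i$ with probability $q^{-i}$ and stays put otherwise. So, conditionally on the past up to $T_{i-1}$, we have
\[\mathbb{P}(T_i-T_{i-1}=k\mid \mathcal{F}_{T_{i-1}})=q^{-i}(1-q^{-i})^{k-1}, \qquad k\in\mathbb{Z}_+.\]
Here $\mathcal{F}_n$ is the $\sigma$-algebra generated by $N_0,\dots,N_n$. This holds because the chain is time-homogeneous, and the strong Markov property applies at the stopping time $T_{i-1}$ (equivalently, one can just expand the probability of a given path).

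The second step gives independence. Since this conditional law does not depend on $\mathcal{F}_{T_{i-1}}$, and $T_1-T_0,\dots,T_{i-1}-T_{i-2}$ are $\mathcal{F}_{T_{i-1}}$-measurable, induction on $i$ shows the joint law factorizes. Concretely, for any $k_1,\dots,k_m\in\mathbb{Z}_+$,
\[\mathbb{P}(T_j-T_{j-1}=k_j \text{ for } j\le m)=\prod_{j=1}^m q^{-j}(1-q^{-j})^{k_j-1}.\]
This identity can be checked directly: the event fixes the whole path of $N$ up to $T_m$, and its probability is the product of the transition probabilities along that path. It also shows that each $T_i$ is almost surely finite.

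The only point needing care is making the Markov property of $(N_n)$ precise. That property is asserted from the algorithm \textsf{SamplingA}, and it follows from \textsf{NextLine}. On input $v$ with $h$ zeros, appending a $0$ gives $h+1$ zeros. The scan visits exactly these zeros, and fails to stop at all of them with probability $q^{-(h+1)}$, using fresh coins that are independent of the past. Failing to stop is exactly the event $b_n=-\infty$, i.e. $N_n=N_{n-1}+1$. So the transition depends only on $N_{n-1}$, and the rest is routine.
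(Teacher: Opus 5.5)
Your proposal is correct and follows the paper's route. The paper simply declares the lemma a straightforward corollary of the transition probabilities \eqref{eqNntransition} for the Markov chain $(N_n)$. Your holding-time argument, with the product of transition probabilities along the path, is exactly the routine verification the paper leaves implicit.
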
 

In particular, this lemma implies $T_i<\infty$ for all $i$ almost surely.

\begin{figure}[h]
\centering
\begin{tabular}{ p{0.3cm} p{0.3cm} p{0.3cm} p{0.3cm} p{0.3cm} p{0.3cm} p{0.3cm} p{0.3cm} p{0.3cm} p{0.3cm} p{0.3cm} p{0.3cm} p{0.3cm} p{0.3cm} p{0.3cm} p{0.3cm} p{0.3cm} p{0.3cm} p{0.3cm} p{0.3cm} p{0.3cm} p{0.3cm} p{0cm}}
\cline{1-22}\multicolumn{1}{|c|}{\cellcolor{yellow!50}1}&\multicolumn{1}{|c|}{\cellcolor{yellow!50}1}&\multicolumn{1}{|c|}{\cellcolor{yellow!50}1}&\multicolumn{1}{|c|}{0}&\multicolumn{1}{|c|}{\cellcolor{yellow!50}1}&\multicolumn{1}{|c|}{\cellcolor{yellow!50}1}&\multicolumn{1}{|c|}{\cellcolor{yellow!50}1}&\multicolumn{1}{|c|}{\cellcolor{yellow!50}1}&\multicolumn{1}{|c|}{\cellcolor{yellow!50}1}&\multicolumn{1}{|c|}{\cellcolor{yellow!50}1}&\multicolumn{1}{|c|}{\cellcolor{yellow!50}1}&\multicolumn{1}{|c|}{\cellcolor{yellow!50}1}&\multicolumn{1}{|c|}{\cellcolor{yellow!50}1}&\multicolumn{1}{|c|}{\cellcolor{yellow!50}1}&\multicolumn{1}{|c|}{\cellcolor{yellow!50}1}&\multicolumn{1}{|c|}{\cellcolor{yellow!50}1}&\multicolumn{1}{|c|}{0}&\multicolumn{1}{|c|}{\cellcolor{yellow!50}1}&\multicolumn{1}{|c|}{\cellcolor{yellow!50}1}&\multicolumn{1}{|c|}{\cellcolor{yellow!50}1}&\multicolumn{1}{|c|}{0}&\multicolumn{1}{|c|}{\cellcolor{blue!20}1}&\\\cline{1-22}
\multicolumn{1}{|c|}{\cellcolor{yellow!50}1}&\multicolumn{1}{|c|}{\cellcolor{yellow!50}1}&\multicolumn{1}{|c|}{\cellcolor{yellow!50}1}&\multicolumn{1}{|c|}{0}&\multicolumn{1}{|c|}{\cellcolor{yellow!50}1}&\multicolumn{1}{|c|}{\cellcolor{yellow!50}1}&\multicolumn{1}{|c|}{\cellcolor{yellow!50}1}&\multicolumn{1}{|c|}{\cellcolor{yellow!50}1}&\multicolumn{1}{|c|}{\cellcolor{yellow!50}1}&\multicolumn{1}{|c|}{\cellcolor{yellow!50}1}&\multicolumn{1}{|c|}{\cellcolor{yellow!50}1}&\multicolumn{1}{|c|}{\cellcolor{yellow!50}1}&\multicolumn{1}{|c|}{\cellcolor{yellow!50}1}&\multicolumn{1}{|c|}{\cellcolor{yellow!50}1}&\multicolumn{1}{|c|}{\cellcolor{yellow!50}1}&\multicolumn{1}{|c|}{\cellcolor{yellow!50}1}&\multicolumn{1}{|c|}{0}&\multicolumn{1}{|c|}{\cellcolor{yellow!50}1}&\multicolumn{1}{|c|}{\cellcolor{blue!20}1}&\multicolumn{1}{|c|}{\cellcolor{yellow!50}1}&\multicolumn{1}{|c|}{0}&\\\cline{1-21}
\multicolumn{1}{|c|}{\cellcolor{yellow!50}1}&\multicolumn{1}{|c|}{\cellcolor{yellow!50}1}&\multicolumn{1}{|c|}{\cellcolor{yellow!50}1}&\multicolumn{1}{|c|}{0}&\multicolumn{1}{|c|}{\cellcolor{yellow!50}1}&\multicolumn{1}{|c|}{\cellcolor{yellow!50}1}&\multicolumn{1}{|c|}{\cellcolor{yellow!50}1}&\multicolumn{1}{|c|}{\cellcolor{yellow!50}1}&\multicolumn{1}{|c|}{\cellcolor{yellow!50}1}&\multicolumn{1}{|c|}{\cellcolor{yellow!50}1}&\multicolumn{1}{|c|}{\cellcolor{yellow!50}1}&\multicolumn{1}{|c|}{\cellcolor{yellow!50}1}&\multicolumn{1}{|c|}{\cellcolor{yellow!50}1}&\multicolumn{1}{|c|}{\cellcolor{yellow!50}1}&\multicolumn{1}{|c|}{\cellcolor{yellow!50}1}&\multicolumn{1}{|c|}{\cellcolor{yellow!50}1}&\multicolumn{1}{|c|}{0}&\multicolumn{1}{|c|}{\cellcolor{yellow!50}1}&\multicolumn{1}{|c|}{0}&\multicolumn{1}{|c|}{\cellcolor{blue!20}1}&\\\cline{1-20}
\multicolumn{1}{|c|}{\cellcolor{yellow!50}1}&\multicolumn{1}{|c|}{\cellcolor{yellow!50}1}&\multicolumn{1}{|c|}{\cellcolor{yellow!50}1}&\multicolumn{1}{|c|}{0}&\multicolumn{1}{|c|}{\cellcolor{yellow!50}1}&\multicolumn{1}{|c|}{\cellcolor{yellow!50}1}&\multicolumn{1}{|c|}{\cellcolor{yellow!50}1}&\multicolumn{1}{|c|}{\cellcolor{yellow!50}1}&\multicolumn{1}{|c|}{\cellcolor{yellow!50}1}&\multicolumn{1}{|c|}{\cellcolor{yellow!50}1}&\multicolumn{1}{|c|}{\cellcolor{yellow!50}1}&\multicolumn{1}{|c|}{\cellcolor{yellow!50}1}&\multicolumn{1}{|c|}{\cellcolor{yellow!50}1}&\multicolumn{1}{|c|}{\cellcolor{yellow!50}1}&\multicolumn{1}{|c|}{\cellcolor{blue!20}1}&\multicolumn{1}{|c|}{\cellcolor{yellow!50}1}&\multicolumn{1}{|c|}{0}&\multicolumn{1}{|c|}{\cellcolor{yellow!50}1}&\multicolumn{1}{|c|}{0}&\\\cline{1-19}
\multicolumn{1}{|c|}{\cellcolor{yellow!50}1}&\multicolumn{1}{|c|}{\cellcolor{yellow!50}1}&\multicolumn{1}{|c|}{\cellcolor{yellow!50}1}&\multicolumn{1}{|c|}{0}&\multicolumn{1}{|c|}{\cellcolor{yellow!50}1}&\multicolumn{1}{|c|}{\cellcolor{yellow!50}1}&\multicolumn{1}{|c|}{\cellcolor{yellow!50}1}&\multicolumn{1}{|c|}{\cellcolor{yellow!50}1}&\multicolumn{1}{|c|}{\cellcolor{yellow!50}1}&\multicolumn{1}{|c|}{\cellcolor{yellow!50}1}&\multicolumn{1}{|c|}{\cellcolor{yellow!50}1}&\multicolumn{1}{|c|}{\cellcolor{yellow!50}1}&\multicolumn{1}{|c|}{\cellcolor{yellow!50}1}&\multicolumn{1}{|c|}{\cellcolor{yellow!50}1}&\multicolumn{1}{|c|}{0}&\multicolumn{1}{|c|}{\cellcolor{yellow!50}1}&\multicolumn{1}{|c|}{0}&\multicolumn{1}{|c|}{\cellcolor{blue!20}1}&\\\cline{1-18}
\multicolumn{1}{|c|}{\cellcolor{yellow!50}1}&\multicolumn{1}{|c|}{\cellcolor{yellow!50}1}&\multicolumn{1}{|c|}{\cellcolor{yellow!50}1}&\multicolumn{1}{|c|}{0}&\multicolumn{1}{|c|}{\cellcolor{yellow!50}1}&\multicolumn{1}{|c|}{\cellcolor{yellow!50}1}&\multicolumn{1}{|c|}{\cellcolor{yellow!50}1}&\multicolumn{1}{|c|}{\cellcolor{yellow!50}1}&\multicolumn{1}{|c|}{\cellcolor{yellow!50}1}&\multicolumn{1}{|c|}{\cellcolor{yellow!50}1}&\multicolumn{1}{|c|}{\cellcolor{yellow!50}1}&\multicolumn{1}{|c|}{\cellcolor{yellow!50}1}&\multicolumn{1}{|c|}{\cellcolor{yellow!50}1}&\multicolumn{1}{|c|}{\cellcolor{yellow!50}1}&\multicolumn{1}{|c|}{0}&\multicolumn{1}{|c|}{\cellcolor{blue!20}1}&\multicolumn{1}{|c|}{0}&\\\cline{1-17}
\multicolumn{1}{|c|}{\cellcolor{yellow!50}1}&\multicolumn{1}{|c|}{\cellcolor{yellow!50}1}&\multicolumn{1}{|c|}{\cellcolor{yellow!50}1}&\multicolumn{1}{|c|}{0}&\multicolumn{1}{|c|}{\cellcolor{blue!20}1}&\multicolumn{1}{|c|}{\cellcolor{yellow!50}1}&\multicolumn{1}{|c|}{\cellcolor{yellow!50}1}&\multicolumn{1}{|c|}{\cellcolor{yellow!50}1}&\multicolumn{1}{|c|}{\cellcolor{yellow!50}1}&\multicolumn{1}{|c|}{\cellcolor{yellow!50}1}&\multicolumn{1}{|c|}{\cellcolor{yellow!50}1}&\multicolumn{1}{|c|}{\cellcolor{yellow!50}1}&\multicolumn{1}{|c|}{\cellcolor{yellow!50}1}&\multicolumn{1}{|c|}{\cellcolor{yellow!50}1}&\multicolumn{1}{|c|}{0}&\multicolumn{1}{|c|}{0}&\\\cline{1-16}
\multicolumn{1}{|c|}{\cellcolor{yellow!50}1}&\multicolumn{1}{|c|}{\cellcolor{yellow!50}1}&\multicolumn{1}{|c|}{\cellcolor{yellow!50}1}&\multicolumn{1}{|c|}{0}&\multicolumn{1}{|c|}{0}&\multicolumn{1}{|c|}{\cellcolor{yellow!50}1}&\multicolumn{1}{|c|}{\cellcolor{yellow!50}1}&\multicolumn{1}{|c|}{\cellcolor{yellow!50}1}&\multicolumn{1}{|c|}{\cellcolor{yellow!50}1}&\multicolumn{1}{|c|}{\cellcolor{yellow!50}1}&\multicolumn{1}{|c|}{\cellcolor{yellow!50}1}&\multicolumn{1}{|c|}{\cellcolor{yellow!50}1}&\multicolumn{1}{|c|}{\cellcolor{yellow!50}1}&\multicolumn{1}{|c|}{\cellcolor{blue!20}1}&\multicolumn{1}{|c|}{0}&\\\cline{1-15}
\multicolumn{1}{|c|}{\cellcolor{yellow!50}1}&\multicolumn{1}{|c|}{\cellcolor{yellow!50}1}&\multicolumn{1}{|c|}{\cellcolor{yellow!50}1}&\multicolumn{1}{|c|}{0}&\multicolumn{1}{|c|}{0}&\multicolumn{1}{|c|}{\cellcolor{yellow!50}1}&\multicolumn{1}{|c|}{\cellcolor{yellow!50}1}&\multicolumn{1}{|c|}{\cellcolor{yellow!50}1}&\multicolumn{1}{|c|}{\cellcolor{yellow!50}1}&\multicolumn{1}{|c|}{\cellcolor{yellow!50}1}&\multicolumn{1}{|c|}{\cellcolor{yellow!50}1}&\multicolumn{1}{|c|}{\cellcolor{yellow!50}1}&\multicolumn{1}{|c|}{\cellcolor{blue!20}1}&\multicolumn{1}{|c|}{0}&\\\cline{1-14}
\multicolumn{1}{|c|}{\cellcolor{yellow!50}1}&\multicolumn{1}{|c|}{\cellcolor{yellow!50}1}&\multicolumn{1}{|c|}{\cellcolor{yellow!50}1}&\multicolumn{1}{|c|}{0}&\multicolumn{1}{|c|}{0}&\multicolumn{1}{|c|}{\cellcolor{yellow!50}1}&\multicolumn{1}{|c|}{\cellcolor{yellow!50}1}&\multicolumn{1}{|c|}{\cellcolor{blue!20}1}&\multicolumn{1}{|c|}{\cellcolor{yellow!50}1}&\multicolumn{1}{|c|}{\cellcolor{yellow!50}1}&\multicolumn{1}{|c|}{\cellcolor{yellow!50}1}&\multicolumn{1}{|c|}{\cellcolor{yellow!50}1}&\multicolumn{1}{|c|}{0}&\\\cline{1-13}
\multicolumn{1}{|c|}{\cellcolor{yellow!50}1}&\multicolumn{1}{|c|}{\cellcolor{yellow!50}1}&\multicolumn{1}{|c|}{\cellcolor{yellow!50}1}&\multicolumn{1}{|c|}{0}&\multicolumn{1}{|c|}{0}&\multicolumn{1}{|c|}{\cellcolor{yellow!50}1}&\multicolumn{1}{|c|}{\cellcolor{yellow!50}1}&\multicolumn{1}{|c|}{0}&\multicolumn{1}{|c|}{\cellcolor{yellow!50}1}&\multicolumn{1}{|c|}{\cellcolor{yellow!50}1}&\multicolumn{1}{|c|}{\cellcolor{yellow!50}1}&\multicolumn{1}{|c|}{\cellcolor{blue!20}1}&\\\cline{1-12}
\multicolumn{1}{|c|}{\cellcolor{yellow!50}1}&\multicolumn{1}{|c|}{\cellcolor{yellow!50}1}&\multicolumn{1}{|c|}{\cellcolor{yellow!50}1}&\multicolumn{1}{|c|}{0}&\multicolumn{1}{|c|}{0}&\multicolumn{1}{|c|}{\cellcolor{yellow!50}1}&\multicolumn{1}{|c|}{\cellcolor{yellow!50}1}&\multicolumn{1}{|c|}{0}&\multicolumn{1}{|c|}{\cellcolor{yellow!50}1}&\multicolumn{1}{|c|}{\cellcolor{yellow!50}1}&\multicolumn{1}{|c|}{\cellcolor{blue!20}1}&\\\cline{1-11}
\multicolumn{1}{|c|}{\cellcolor{yellow!50}1}&\multicolumn{1}{|c|}{\cellcolor{yellow!50}1}&\multicolumn{1}{|c|}{\cellcolor{yellow!50}1}&\multicolumn{1}{|c|}{0}&\multicolumn{1}{|c|}{0}&\multicolumn{1}{|c|}{\cellcolor{yellow!50}1}&\multicolumn{1}{|c|}{\cellcolor{yellow!50}1}&\multicolumn{1}{|c|}{0}&\multicolumn{1}{|c|}{\cellcolor{yellow!50}1}&\multicolumn{1}{|c|}{\cellcolor{blue!20}1}&\\\cline{1-10}
\multicolumn{1}{|c|}{\cellcolor{yellow!50}1}&\multicolumn{1}{|c|}{\cellcolor{yellow!50}1}&\multicolumn{1}{|c|}{\cellcolor{yellow!50}1}&\multicolumn{1}{|c|}{0}&\multicolumn{1}{|c|}{0}&\multicolumn{1}{|c|}{\cellcolor{yellow!50}1}&\multicolumn{1}{|c|}{\cellcolor{yellow!50}1}&\multicolumn{1}{|c|}{0}&\multicolumn{1}{|c|}{\cellcolor{blue!20}1}&\\\cline{1-9}
\multicolumn{1}{|c|}{\cellcolor{yellow!50}1}&\multicolumn{1}{|c|}{\cellcolor{yellow!50}1}&\multicolumn{1}{|c|}{\cellcolor{yellow!50}1}&\multicolumn{1}{|c|}{0}&\multicolumn{1}{|c|}{0}&\multicolumn{1}{|c|}{\cellcolor{yellow!50}1}&\multicolumn{1}{|c|}{\cellcolor{yellow!50}1}&\multicolumn{1}{|c|}{0}&\\\cline{1-8}
\multicolumn{1}{|c|}{\cellcolor{yellow!50}1}&\multicolumn{1}{|c|}{\cellcolor{yellow!50}1}&\multicolumn{1}{|c|}{\cellcolor{yellow!50}1}&\multicolumn{1}{|c|}{0}&\multicolumn{1}{|c|}{0}&\multicolumn{1}{|c|}{\cellcolor{yellow!50}1}&\multicolumn{1}{|c|}{\cellcolor{blue!20}1}&\\\cline{1-7}
\multicolumn{1}{|c|}{\cellcolor{yellow!50}1}&\multicolumn{1}{|c|}{\cellcolor{yellow!50}1}&\multicolumn{1}{|c|}{\cellcolor{yellow!50}1}&\multicolumn{1}{|c|}{0}&\multicolumn{1}{|c|}{0}&\multicolumn{1}{|c|}{\cellcolor{blue!20}1}&\\\cline{1-6}
\multicolumn{1}{|c|}{\cellcolor{yellow!50}1}&\multicolumn{1}{|c|}{\cellcolor{blue!20}1}&\multicolumn{1}{|c|}{\cellcolor{yellow!50}1}&\multicolumn{1}{|c|}{0}&\multicolumn{1}{|c|}{0}&\\\cline{1-5}
\multicolumn{1}{|c|}{\cellcolor{yellow!50}1}&\multicolumn{1}{|c|}{0}&\multicolumn{1}{|c|}{\cellcolor{blue!20}1}&\multicolumn{1}{|c|}{0}&\\\cline{1-4}
\multicolumn{1}{|c|}{\cellcolor{yellow!50}1}&\multicolumn{1}{|c|}{0}&\multicolumn{1}{|c|}{0}&\\\cline{1-3}
\multicolumn{1}{|c|}{\cellcolor{blue!20}1}&\multicolumn{1}{|c|}{0}&\\\cline{1-2}
\multicolumn{1}{|c|}{0}&\\\cline{1-1}
\end{tabular}

\caption{A random sample of the matrix $A$ with the choice of $q=2$. The blue cells correspond to elements of the verbose persistence diagram $\mathcal{P}$. The orientation of the picture is chosen such that it is in alignment with the persistence diagram, that is, the bottom left cell displays $A(1,1)$, the cell above that displays~$A(1,2)$.}
\end{figure}
\subsection{A multi phase sampling algorithm}

The following lemma is clear from the description of the \textsf{SamplingA} algorithm.

\begin{lemma}\label{invarianceindist}
Given $r\in \mathbb{Z}_{\ge 0}$, let us define the random matrix $A'=\left(A'(i,j)\right)_{i,j\in \mathbb{Z_+}}$ by setting
\[A'(i,j)=A(i+r,j+r).\]
Then $A'$ has the same distribution as $A$.
\end{lemma}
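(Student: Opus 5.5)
The plan is to show that the "suffix" of each row of $A$, namely the entries with first index larger than $r$, evolves from time $r$ onward exactly like \textsf{SamplingA} started from scratch. For $n\ge r$, let
\[V_n=(A(r+1,n),A(r+2,n),\dots,A(n,n))\in\{0,1\}^{n-r}.\]
At $n=r$ this is the empty vector, just as the input of \textsf{SamplingA} is empty before the first step. Since $A'(i,j)=A(i+r,j+r)$ and both matrices vanish below the diagonal ($A(r+i,r+j)=0$ when $i>j$), it suffices to prove one claim: the sequence $(V_{r+m})_{m\ge 0}$ has the same law as the sequence of column vectors produced by \textsf{SamplingA}.

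The key observation concerns the procedure \textsf{NextLine}, which scans $u$ from right to left. Let $v\in\{0,1\}^{n-1}$ and write $v=(v^{\le r},v^{>r})$, splitting off the first $r$ coordinates. Run \textsf{NextLine}$(v)$ and look only at coordinates $r+1,\dots,n$ of the output $w$. Two cases occur.
\begin{enumerate}[(i)]
\item If the procedure stops at some index $i>r$, then the suffix of $w$ is the suffix of $u$ with coordinate $i$ flipped to $1$.
\item If it stops at some $i\le r$ or never stops, then the suffix of $w$ equals the suffix of $u$.
\end{enumerate}
Coordinates $n,n-1,\dots,r+1$ are examined first, and the coin tosses there are independent of those used later. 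Hence the suffix of the output has exactly the law of the vector part of \textsf{NextLine}$(v^{>r})$, where the stopping index is $i-r$ in case (i) and $-\infty$ in case (ii). In particular this law depends on $v$ only through $v^{>r}$.

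Now I combine this with Lemma~\ref{lemmaNextLine}. By that lemma, the column process $(A(\cdot,n))_n$ is a Markov chain with transitions given by \textsf{NextLine}. The previous paragraph then shows that, conditionally on the whole past $(A(\cdot,k))_{k\le n-1}$, the law of $V_n$ is the law of the vector output of \textsf{NextLine}$(V_{n-1})$. So $(V_n)_{n\ge r}$ is itself a Markov chain with the \textsf{SamplingA} transition kernel, started from the empty vector. Its law therefore agrees with that of the columns of $A$, and by induction on $m$ the finite-dimensional distributions coincide. This proves the lemma.

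The only delicate point is the conditioning: the transition for $V_n$ must not depend on the prefix $(A(1,n-1),\dots,A(r,n-1))$. This is exactly what the right-to-left scanning order guarantees, so I expect no real obstacle beyond stating this carefully.
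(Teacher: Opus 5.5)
Your proof is correct and follows the paper's approach. The paper only says the lemma is clear from the description of \textsf{SamplingA}, and your observation is exactly the detail behind that remark: because \textsf{NextLine} scans from right to left, the coordinates with index larger than $r$ evolve as an autonomous \textsf{SamplingA} chain started from the empty vector at time $r$. Your use of the Markov property of the column process relies on the same fact the paper uses implicitly when it asserts that \textsf{SamplingA} samples $A$; the proof of Lemma~\ref{lemmaNextLine} in effect works conditionally on all of $B_{n-1}$.
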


Next we define a variant of the \textsf{NextLine} procedure, which we call \textsf{NextLine2}. It takes a vector $v\in \{0,1\}^{n}$ as an input, and gives back a pair $(w,i)$ as an output, where $w\in \{0,1\}^{n}$ and $i\in\{1,2\dots,n\}\cup\{-\infty\}$. As before, the output also depends on some additional randomness, which is provided by a coin which comes up heads with probability $\frac{q-1}q$. We assume that the coin tosses are independent. We go through the components of $v$ starting with $v_{n}$ and then moving from right to left. Whenever we encounter a component that is equal to $0$, we toss the coin and if it comes up heads then we stop, otherwise we continue moving to the left. Let us assume that we stopped at the $i$th component. If we went through all the components of $v$ without stopping, then we set $i=-\infty$. If $i\neq -\infty$, we output the pair $(w,i)$, where $w$ is obtained from $v$ by changing the $i$th component of $v$ to~$1$ from $0$. If $i=-\infty$, we output $(v,-\infty)$. In other words, the only difference between \textsf{NextLine} and \textsf{NextLine2} is that while in \textsf{NextLine}, we start by appending the input with a zero, in \textsf{NextLine2}, this step is omitted.

Next, we describe a procedure called \textsf{MultiPhaseSamplingA}. This procedure takes an $r\in\mathbb{Z}_{\ge 0}$ as an input, and produces a random matrix $A=\left(A(i,j)\right)_{i,j\in\mathbb{Z}_+}$ in three phases as follows:

\begin{itemize}
 \item \textbf{Phase 1:}

 Using \textsf{SamplingA}, we generate a random matrix $A'=(A'(i,j))_{i,j\in \mathbb{Z}_+}$ together with the corresponding sequences of birth times $b_1',b_2',\dots$.

 Then, for all $i,j\in \mathbb{Z}_+$, we set
\[A(i+r,j+r)=A'(i,j).\]

 \item \textbf{Phase 2:} 

 We use \textsf{SamplingA}($r$) to generate $\left(A(i,j)\right)_{1\le i\le j\le r}$ and $b_1,b_2,\dots,b
 _r$.

 \item \textbf{Phase 3:}

 For $j=1,2,3,\dots$, we do the following:
 \begin{itemize}
 \item If $b_j'\neq -\infty$, we set $b_{j+r}=r+b_j'$, and
 \begin{multline*}(A(1,j+r),A(2,j+r),\dots,A(r,j+r))\\=(A(1,j+r-1),A(2,j+r-1),\dots,A(r,j+r-1)).\end{multline*}
 \item If $b_j'=-\infty$, then we set
 \begin{multline*}((A(1,j+r),A(2,j+r),\dots,A(r,j+r)),b_{j+r})\\=\textsf{NextLine2}(A(1,j+r-1),A(2,j+r-1),\dots,A(r,j+r-1))).\end{multline*}
 \end{itemize}
\end{itemize}

See Figure~\ref{FigureRegions} for an illustration.

\begin{figure}
 \centering
 \begin{tikzpicture}
 \draw[thick,black] (0,0)--(0,10);
 \draw[thick,black] (0,0)--(10,10);
 \draw[thick,black] (0,5)--(5,5);
 \draw[thick,black] (5,5)--(5,10);
 \node at (2,4) {Phase 2};
 \node at (2,9) {Phase 3};
 \node at (7,9) {Phase 1};
 \draw [thick, decorate,
 decoration = {brace,amplitude=10pt}] (-0.2,0) -- (-0.2,5) node[pos=0.5,left=14pt,black]{\Large $r$};;
 \end{tikzpicture}
 \caption{The regions of the matrix $A$ filled by the different phases \break of \textsf{MultiPhaseSamplingA}.}
 \label{FigureRegions}
\end{figure}

It is straightforward to prove the following lemma.

\begin{lemma}
 The random matrices produced by \textsf{SamplingA} and \textsf{MultiPhaseSamplingA} have the same distribution.
\end{lemma}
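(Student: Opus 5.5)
We show that \textsf{MultiPhaseSamplingA} is a reordering of the coin tosses of \textsf{SamplingA}, so that the two constructions yield matrices with the same joint law. The first step is to describe \textsf{SamplingA} column by column. When \textsf{NextLine} is applied to column $n-1$, it scans the entries $A(i,n)$ from $i=n$ down to $i=1$. Each zero entry it meets costs one coin toss. The scan stops at the first head and flips that zero to a one; otherwise column $n$ equals column $n-1$ with a $0$ appended.

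We then split each scan of column $n$, for $n>r$, into two parts.
\begin{itemize}
\item The \emph{upper part} covers the rows $i\in\{r+1,\dots,n\}$.
\item The \emph{lower part} covers the rows $i\in\{1,\dots,r\}$.
\end{itemize}
The scan reaches the lower part only if no head occurred in the upper part. This event is exactly $b_n\le r$, equivalently $b_{n-r}'=-\infty$ in the shifted picture.

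The key observation is that the upper part of the scan, restricted to rows $>r$, evolves autonomously. The entries $(A(i,n))_{i>r}$ depend only on $(A(i,n-1))_{i>r}$ and on the coin tosses used there. By the definition of \textsf{NextLine}, they evolve exactly as a fresh \textsf{SamplingA} run shifted by $r$. This is the content of Lemma~\ref{invarianceindist}. For $n\le r$ there is nothing above row $r$.

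Within the upper part, a stop at row $i>r$ gives $b_n=i$ and leaves rows $1,\dots,r$ unchanged. If no stop occurs there, the scan continues on rows $1,\dots,r$ of column $n-1$ with no appended zero. This is precisely \textsf{NextLine2} applied to $(A(1,n-1),\dots,A(r,n-1))$. So the induced decision rule for each column coincides with Phase 3.

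To turn this into equality of distributions, we fix two independent families of coin tosses. One family is indexed by the positions visited in the upper region, the other by the positions visited in the lower region. The sequential process \textsf{SamplingA} can be realized by drawing the tosses from these families according to the region being scanned. Every toss is still used at most once and is independent of the past, so the law is unchanged.

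Under this coupling the two algorithms coincide.
\begin{itemize}
\item The first $r$ columns come from \textsf{SamplingA}$(r)$, which is Phase 2.
\item The upper region comes from a \textsf{SamplingA} run on the shifted indices, which is Phase 1.
\item The lower region for $n>r$ comes from \textsf{NextLine2} steps, each triggered exactly when the upper scan fails. That failure is the event $b_{n-r}'=-\infty$, which gives Phase 3.
\end{itemize}
Phase 1 is sampled before Phase 3, but this causes no problem. Phase 1 does not depend on the lower-region tosses, and Phase 3 only reads Phase 1's outcomes $b'_j$.

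The main point needing care is the independence bookkeeping: we must confirm that no lower-region information is used by the upper-region evolution. This holds because the upper scan happens first in each column and never looks at rows $\le r$. The rest is a routine induction on $n$.
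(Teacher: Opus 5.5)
Your argument is correct, and it is exactly the natural proof the paper leaves implicit (the paper gives no written argument and only calls the lemma straightforward): you split each \textsf{NextLine} scan at row $r$, note that rows $>r$ (which contain the appended zero) evolve as an autonomous shifted copy of \textsf{SamplingA}, note that the continuation into rows $\le r$ is a \textsf{NextLine2} step occurring precisely when $b'_{n-r}=-\infty$, and justify the reordering of phases by indexing coin tosses by position, so that Phases 1, 2 and 3 draw on independent coin families. One cosmetic remark: Lemma~\ref{invarianceindist} only gives the distributional statement, whereas you need the pathwise fact that the upper block is a deterministic function of the upper coins; you do in fact verify this directly from the definition of \textsf{NextLine}.
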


For $r,j\in \mathbb{Z}_{\ge 0}$, let
\[M_{r,j}=\left|\{i\,:\,1\le i\le r,\,A(i,j+r)=0\}\right|.\]

For a fixed $r$, let us try to understand the evolution of the sequence $M_{r,0},M_{r,1},\dots$ conditioned on the choices that were made in the first two phases of \textsf{MultiPhaseSamplingA}. Let $j\ge 1$, one can see that:
\begin{itemize}
 \item If $b_j'\neq -\infty$, then $M_{r,j}=M_{r,j-1}$.
 \item If $b_j'=-\infty$, then
\[\mathbb{P}(M_{r,j}=k\,|\,M_{r,0},M_{r,1},\dots,M_{r,j-1})=\begin{cases} q^{-M_{r,j-1}}&\text{ if $k=M_{r,j-1}$,}\\
 1-q^{-M_{r,j-1}}&\text{ if $k=M_{r,j-1}-1$,}\\
 0&\text{ otherwise.}
 \end{cases}
 \]
\end{itemize}

In agreement with our earlier definitions, let
\begin{align*}
T_0'&=0,\\
T_i'&=\min \{n>T_{i-1}'\,:\,b_n'=-\infty\}&\text{ for }i>0.
\end{align*}

Then our previous observations on the evolution of $M_{r,0},M_{r,1},\dots$ can be rephrases as follows:
\begin{lemma}\hfill\label{Markovlemma}
\begin{enumerate}[(a)]
 \item $M_{r,j}=M_{r,T_i'}$ for all $T_i'\le j<T_{i+1}'$.
 \item The sequence $M_{r,T_0'},M_{r,T_1'},M_{r,T_2'},\dots$ is a Markov-chain where the transition probabilities are given by 
\[P(i,j)=\begin{cases}
 q^{-i}&\text{if $j=i$},\\
 1-q^{-i}&\text{if $j=i-1$},\\
 0&\text{otherwise}.
 \end{cases}\]
\end{enumerate}
\end{lemma}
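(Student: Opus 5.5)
Both parts follow from the one-step description of $M_{r,\cdot}$ given just before the statement, applied inside the three phases of \textsf{MultiPhaseSamplingA}. All the randomness used in Phase 3 at step $j$ consists of fresh coin tosses in \textsf{NextLine2}, independent of Phases 1 and 2 and of earlier Phase 3 steps. We therefore condition on the full outcome of Phase 1, in particular on the sequence $b_1',b_2',\dots$ and hence on the times $T_0'<T_1'<T_2'<\cdots$, which are almost surely finite by Lemma~\ref{lemmaincrement}.

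For part (a), take $j$ with $T_i'<j<T_{i+1}'$. By the definition of the $T'$'s, $b_j'\neq-\infty$, so Phase 3 copies column $j+r-1$ to column $j+r$ on rows $1,\dots,r$. Hence $M_{r,j}=M_{r,j-1}$. A trivial induction on $j$ from $T_i'$ up to $T_{i+1}'-1$ then gives $M_{r,j}=M_{r,T_i'}$.

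For part (b), look at step $j=T_{i+1}'$, where $b_j'=-\infty$. We apply \textsf{NextLine2} to the vector $(A(1,j+r-1),\dots,A(r,j+r-1))$. By part (a) this vector equals the column at time $T_i'+r$, and it has exactly $m=M_{r,T_i'}$ zeros. \textsf{NextLine2} visits these $m$ zeros from right to left and tosses a coin at each one.
\begin{itemize}
\item All $m$ tosses come up tails with probability $q^{-m}$. In that case the output is unchanged and $M_{r,T_{i+1}'}=m$.
\item Otherwise exactly one zero is turned into a $1$, and $M_{r,T_{i+1}'}=m-1$.
\end{itemize}
The case $m=0$ is consistent with this, since $P(0,0)=1$.

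It remains to check the Markov property. The coin tosses used at step $T_{i+1}'$ are independent of everything generated before, namely Phase 1, Phase 2 and the earlier Phase 3 steps. So, conditionally on Phase 1 and on $M_{r,T_0'},\dots,M_{r,T_i'}$, the law of $M_{r,T_{i+1}'}$ depends only on $M_{r,T_i'}$ and is given by $P$. This kernel does not involve the conditioning on Phase 1, so the Markov property also holds unconditionally, by averaging.

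The only point needing care is this bookkeeping: the times $T_i'$ are random, but they are determined by Phase 1. Conditioning on Phase 1 turns them into deterministic times, and Phase 3 at those times uses only fresh randomness. With that conditioning in place the argument is routine, and I do not expect a real obstacle.
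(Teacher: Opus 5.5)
Your proposal is correct and follows the paper's own argument. The paper likewise conditions on the outcomes of the earlier phases of \textsf{MultiPhaseSamplingA}. It observes that rows $1,\dots,r$ of the column are copied when $b_j'\neq-\infty$, and that \textsf{NextLine2} keeps or lowers the zero count with probabilities $q^{-M_{r,j-1}}$ and $1-q^{-M_{r,j-1}}$ when $b_j'=-\infty$, then reads off both parts; you spell out the induction and the independence bookkeeping in more detail than the paper does.
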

\begin{lemma}\label{lemmafinitedeath}
Given any $i\in \mathbb{Z}_+$, let
\[d_i=\min\{j\in\mathbb{Z}_+\,:\,(i,j)\in\mathcal{P}\}.\]
Then $d_i-i$ has the same distribution as $X$ defined in the statement of Theorem~\ref{thmlifetime}. 

In particular, with probability $1$, for all $i$, we have $d_i<\infty$, that is, there is a $d_i$ such that~$(i,d_i)\in \mathcal{P}$.

Moreover, $d_i$ is the unique $j$ such that $(i,j)\in\mathcal{P}$.

\end{lemma}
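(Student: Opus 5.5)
We use \textsf{MultiPhaseSamplingA} with $r=i-1$ (legitimate since it produces the same distribution as \textsf{SamplingA}), and track column $i$ of $A$ together with $M_{i,\cdot}$. The idea is that $(i,j)\in\mathcal{P}$ exactly when $b_j=i$, and in the algorithm this means that at time $j$ the entry $A(i,\cdot)$ flips from $0$ to $1$. Since the entries $A(i,\cdot)$ are nondecreasing in the second coordinate (\textsf{NextLine} and \textsf{NextLine2} only flip $0$'s to $1$'s), at most one such $j$ exists. This proves the uniqueness claim once existence is established, and then $d_i$ is the first $j$ with $A(i,j)=1$.

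To find the law of $d_i-i$, run Phase 1 with shift $r=i-1$. Then $A(i,\cdot+i-1)=A'(1,\cdot)$, and column $1$ of $A'$ is driven by \textsf{SamplingA} alone, so Phase 2 plays no role and the shift gives an exact copy. We may therefore take $i=1$ and study $d_1$, the first $j$ with $A(1,j)=1$.

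We have $A(1,1)=1$ exactly when the first coin is heads, which has probability $\frac{q-1}{q}$. In that case $d_1=1$, which matches $G=1$ and $X=0$. Otherwise $A(1,1)=0$, and we now view column $1$ as ``Phase 2 with $r=1$'' and columns $\ge 2$ as Phase 3, with $A'$ the shifted matrix. This gives $M_{1,0}=1$, so the chain $M_{1,T'_k}$ of Lemma~\ref{Markovlemma} starts at $1$. It moves $1\to0$ with probability $1-q^{-1}$ at each time $T'_k$, $k\ge1$, and $d_1$ is the time $1+T'_K$ at which this happens, where $K$ is the first success.

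The number of failures before this success is geometric, so $K$ is a shifted geometric variable with success probability $\frac{q-1}{q}$, and $K$ is independent of the $T'$ sequence. Combining with the first coin, the total number of heads-trials needed, counting the first coin, is $G=K+1$ in the case where the first coin fails. Then
\[d_1-1=T'_{G-1}=\sum_{k=1}^{G-1}(T'_k-T'_{k-1}),\]
and by Lemma~\ref{lemmaincrement} the increments are independent shifted geometric variables with parameters $q^{-k}$, independent of $G$. The case where the first coin succeeds is the same formula with $G=1$. Hence $d_1-1\stackrel{d}{=}X$.

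The main obstacle is verifying this independence. The coin tosses governing $M$ in Phase 3 (\textsf{NextLine2}) are fresh and independent of $A'$, hence independent of the $T'_k$. Also, at a time $T'_k$ with $r=1$ and $A(1,\cdot)=0$, \textsf{NextLine2} examines the single entry and flips it with probability $\frac{q-1}{q}$, consistent with $1-q^{-M}$ for $M=1$. Finally, $X<\infty$ almost surely because $G<\infty$ and each $T'_k<\infty$. A countable union over $i$ then gives $d_i<\infty$ for all $i$ with probability $1$.
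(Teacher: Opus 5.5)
Your proposal is correct and follows essentially the same route as the paper. Both arguments analyze \textsf{MultiPhaseSamplingA}$(1)$ and let $G$ count independent trials with success probability $\frac{q-1}{q}$ until the first success. Here the trials are the initial coin for $A(1,1)$ followed by the $1\to 0$ attempts of the chain $M_{1,T'_k}$. You then write $d_1-1=T'_{G-1}$, with $G$ independent of the increments from Lemma~\ref{lemmaincrement}. You pass to general $i$ by the shift; your Phase~1 embedding with $r=i-1$ is exactly Lemma~\ref{invarianceindist}. Uniqueness comes from the monotonicity of $A(i,\cdot)$. The differences are only cosmetic: you perform the shift first and split explicitly on the first coin.
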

\begin{proof}
First, we prove the case $i=1$. Let us analyze \textsf{MultiPhaseSamplingA}($1$). We have
\[\mathbb{P}(M_{1,0}=0)=\mathbb{P}(A(1,1)=1)=\frac{q-1}q.\]
Moreover, by Lemma~\ref{Markovlemma}, we have
\[\mathbb{P}(M_{1,T_{i+1}'}=0\,|\,M_{1,T_i'}=1)=\frac{q-1}q.\]
Therefore, if we define
\[G=1+\min \{i\in \mathbb{Z}_{\ge 0}\,:\, M_{1,T_i'}=0\}.\]
Then $G$ is a shifted geometric random variable with success probability $\frac{q-1}q$. Recall that by Lemma~\ref{lemmaincrement}, $T_1'-T_0',T_2'-T_1',T_3'-T_2',\dots$ are independent and $T_i'-T_{i-1}'$ is a shifted geometric random variable with success probability $q^{-i}$. Moreover, observe that $G$ is also independent from~$T_1'-T_0',T_2'-T_1',T_3'-T_2',\dots$.

Thus,
\begin{multline*}d_1-1=\min\{j\in\mathbb{Z}_+\,:\,(1,j)\in\mathcal{P}\}-1=\min\{j\in \mathbb{Z}_+\,|\,A(1,j)=1\}-1\\=T_{G-1}'=\sum_{i=1}^{G-1} (T_i'-T_{i-1}')\end{multline*}
has the same distribution as $X$.

Using Lemma~\ref{invarianceindist}, we see that $d_i-i$ has the same distribution as $d_1-1$ for all $i$. Thus, $d_i-i$ has the same distribution as $X$.

It is straightforward to see from the \textsf{SamplingA} algorithm that $A(i,j)=1$ for all $j\ge d_i$, so for $j>d_i$, we can not have $(i,j)\in\mathcal{P}$. Thus, the last statement also follows.
\end{proof}

The next lemma is a straightforward consequence of our eralier lemmas.

\begin{lemma}\label{lemmaBasicProp}\hfill
\begin{enumerate}[(a)]
 \item Let $i\in\mathbb{Z}_+$. Then
\[A(i,k)=\begin{cases} 0&\text{if }k<d_i\\
 1&\text{if }k\ge d_i.\end{cases}\]
 \item The set $\mathcal{P}$ uniquely determines the matrix $A$. Moreover, the $\mathcal{P}\cap [1,n]^2$ uniquely determines the submatrix $(A(i,j))_{1\le i,j\le n}$.
 \item The matrix $A$ uniquely determines $\mathcal{P}$.
 \item\label{partdofBasic} Given $i\in \mathbb{Z}_{\ge 0}$ and $j\in\mathbb{Z}_+$, we have
\[|\{i<k\le j\,:\,A(k,j)=1\}|=|\{i<k\le j\,:\,d_k\le j\}|=|\{1\le h\le j\,:\,b_h>i\}|.\]
\end{enumerate}
\end{lemma}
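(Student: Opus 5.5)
The plan is to prove the four parts in order, reading everything off the construction of $\mathcal{P}$ and $A$ in Section~\ref{SecFirstSteps} and the \textsf{SamplingA} description.

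\emph{Part (a).} Fix $i$ and use induction on $k$. For $k<i$ we have $A(i,k)=0$, since $A(r,s)=0$ whenever $r>s$. For $k\ge i$, the column $(A(1,k),\dots,A(k,k))$ is produced from the previous column by \textsf{NextLine} (Lemma~\ref{lemmaNextLine}). This step does three things:
\begin{itemize}
\item it appends a new $0$ in position $k$;
\item it flips at most one $0$ to $1$, namely the entry in position $b_k$;
\item it never changes a $1$ back to $0$.
\end{itemize}
So for fixed $i$ the sequence $k\mapsto A(i,k)$ is nondecreasing in $\{0,1\}$. It jumps from $0$ to $1$ exactly at the first $k$ with $b_k=i$, that is, with $(i,k)\in\mathcal{P}$. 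By Lemma~\ref{lemmafinitedeath}, that $k$ is $d_i$.

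\emph{Parts (b) and (c).}
\begin{itemize}
\item For (b), part (a) shows $A$ is determined by the numbers $d_i$, which are read off from $\mathcal{P}$. For the truncated version, $A(i,j)$ with $i,j\le n$ depends only on whether $d_i\le j$. This holds iff $(i,d_i)\in\mathcal{P}\cap[1,n]^2$ with $d_i\le j$, so $\mathcal{P}\cap[1,n]^2$ suffices.
\item For (c), note $(b,n)\in\mathcal{P}$ iff $b=b_n\neq-\infty$. By the computation in Section~\ref{SecFirstSteps}, this happens iff $A(b,n-1)=0$ and $A(b,n)=1$. Since at most one entry changes, $b_n$ is the unique position where column $n$ differs from column $n-1$ (after appending a zero), and $b_n=-\infty$ if there is no such position. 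Hence $A$ determines every $b_n$, and so determines $\mathcal{P}$.
\end{itemize}

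\emph{Part (d).} The first equality is immediate from part (a): for $k\le j$, $A(k,j)=1$ iff $d_k\le j$. For the second equality, use the bijection $k\mapsto (k,d_k)\in\mathcal{P}$. The set $\{i<k\le j: d_k\le j\}$ corresponds to points $(k,d)\in\mathcal{P}$ with $i<k$ and $d\le j$. These are exactly the points $(b_h,h)$ with $h\le j$ and $b_h>i$. Here we use that each $h$ carries at most one point (part \eqref{lemmadedpart4b}), that $b_h\le h\le j$ holds automatically, and that $-\infty\not>i$. By the uniqueness in Lemma~\ref{lemmafinitedeath}, the map $k\mapsto d_k=h$ is a bijection onto $\{h\le j: b_h>i\}$ with inverse $h\mapsto b_h$.

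The only step needing care is the monotonicity and single-flip property in (a). It comes straight from the \textsf{NextLine} definition, so I expect no real obstacle.
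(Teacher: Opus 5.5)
Your proof is correct and follows the route the paper intends. The paper gives no separate proof, calling the lemma a straightforward consequence of earlier lemmas, and in the proof of Lemma~\ref{lemmafinitedeath} it reads the monotonicity of $k\mapsto A(i,k)$ off the \textsf{SamplingA}/\textsf{NextLine} mechanics exactly as you do, with (b)--(d) then following by the same bookkeeping you give. One minor remark: the single-flip property in (a) is deterministic, since it comes from the computation in Section~\ref{SecFirstSteps} that you already invoke in (c), whereas Lemma~\ref{lemmaNextLine} on its own only gives it almost surely.
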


The next lemma will paly a key role in understanding the fluctuations of the persistent Betti numbers.

\begin{lemma}\label{perbettiasrandommatrices}
Let $r,t\in\mathbb{Z}_{\ge 0}$. Let $K$ and $K'$ be independent random variables such that $K$ has the same distribution as $\dim\ker L_r$, and $K'$ has the same distribution as $\dim\ker L_t$. Let $H$ be a uniform random $K'\times K$ matrix over $\mathbb{F}_q$. 

Then $\beta^{r,r+t}$ has the same distribution as $\dim\ker H$.
\end{lemma}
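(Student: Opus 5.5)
We will run \textsf{MultiPhaseSamplingA}($r$) and express $\beta^{r,r+t}$ through the quantities it produces. By Lemma~\ref{lemmadef0ind} and Lemma~\ref{lemmaBasicProp}, we have
\[\beta^{r,r+t}=r-|\mathcal{P}\cap([1,r]\times[1,r+t])|=|\{1\le i\le r\,:\,d_i>r+t\}|.\]
By Lemma~\ref{lemmaBasicProp}(a), this equals $|\{1\le i\le r\,:\,A(i,r+t)=0\}|=M_{r,t}$. So it suffices to find the law of $M_{r,t}$.

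Phase~2 generates $(A(i,r))_{i\le r}$ exactly as \textsf{SamplingA}($r$). Hence $M_{r,0}=N_r$, and by \eqref{eqcorankexpr} (equivalently, by counting zeros in row $r$ of $A$) this has the law of $\dim\ker L_r$; call it $K$. Phase~1 is independent of Phase~2. By Lemma~\ref{Markovlemma}, $M_{r,t}=M_{r,T'_{K'}}$, where $K'=|\{j\le t\,:\,b'_j=-\infty\}|=N'_t$. Here $N'_t$ is the corank statistic of the independent copy $A'$, so it has the law of $\dim\ker L_t$ and is independent of $K$. Moreover, conditionally on $K$ and on all of Phase~1, the steps of the chain are driven only by the fresh coins of \textsf{NextLine2}. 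Thus $M_{r,t}$ is obtained by starting from $K$ and applying $K'$ steps of the Markov chain $P$ of Lemma~\ref{Markovlemma}(b), which moves $m\mapsto m$ with probability $q^{-m}$ and $m\mapsto m-1$ with probability $1-q^{-m}$.

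It remains to identify this with $\dim\ker H$ for a uniform $K'\times K$ matrix $H$. We build $H$ row by row: fix $K$ columns, add uniform random rows one at a time, and track $m=\dim\ker$ of the current matrix, viewed as a map $\mathbb{F}_q^{K}\to\mathbb{F}_q^{\#\text{rows}}$. Initially $m=K$. A new uniform row $\rho$ keeps the kernel unchanged exactly when $\rho$ vanishes on the current kernel. Restricted to an $m$-dimensional kernel, $\rho$ is a uniform linear functional on it, and it vanishes there with probability $q^{-m}$; otherwise the kernel drops by exactly one. This is precisely the transition $P$. Given $(K,K')$, the uniform $K'\times K$ matrix is obtained by $K'$ such independent row additions, so $\dim\ker H$ has the same conditional law as $M_{r,t}$. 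Integrating over the independent pair $(K,K')$ finishes the proof.

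The main obstacle is the conditional-independence bookkeeping in the middle step. We must check that $K'$ is determined by Phase~1 alone, and that the \textsf{NextLine2} coins are fresh and independent of both Phase~1 and Phase~2, so that conditioning on $K'$ does not bias the chain. We also need the identification $M_{r,0}=N_r\sim\dim\ker L_r$, which holds because $N_n$ counts the times $b_j=-\infty$ for $j\le n$, and by \eqref{eqcorankexpr} that count is $\dim\ker L_n$.
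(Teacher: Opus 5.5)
Your proposal is correct and follows the paper's proof essentially step for step. You run \textsf{MultiPhaseSamplingA}($r$), identify $\beta^{r,r+t}$ with $M_{r,t}=|\{i\le r\,:\,A(i,r+t)=0\}|$, read off the independent $K$ from Phase~2 and $K'$ from Phase~1, use Lemma~\ref{Markovlemma} to get $K'$ steps of the chain $P$ started at $K$, and match this with the row-by-row kernel-dimension chain of $H$. Your explicit justification of that last step (a uniform row restricts to a uniform functional on the current $m$-dimensional kernel, which vanishes with probability $q^{-m}$) supplies the ``simple linear algebra'' that the paper leaves implicit.
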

\begin{proof}
Let us sample the random matrix $A$ by the procedure \textsf{MultiPhaseSamplingA}($r$). 

For a moment, let us redefine $K$ and $K'$ as follows:
\begin{align*}
 K&=|\{i\in \{1,2,\dots,r\}\,:\,A(i,r)=0\}|,\\
 K'&=|\{j\in \{1,2,\dots,t\}\,:\,b_j'=-\infty\}|.
\end{align*}
Since the choices made by \textsf{MultiPhaseSamplingA}($r$) are independent, we see that $K$ and $K'$ are independent. Moreover, by equation~\eqref{eqcorankexpr} and Lemma~\ref{lemmaBasicProp}, we see that $K$ has the same distribution as $\dim\ker L_r$, and $K'$ has the same distribution as $\dim\ker L_t$.

By Lemma~\ref{lemmaBasicProp}, we see that
\[\beta^{r,r+t}=|\{i\in\{1,2,\dots,r\}\,:\,d_i>r+t\}|=|\{i\in\{1,2,\dots,r\}\,:\,A(i,r+t)=0\}|.\]
Combining this with Lemma~\ref{Markovlemma}, we see that 
\[\mathbb{P}(\beta^{r,r+t}=j\,|\,K,K')=P^{K'}(K,j).\]

Let $H_i$ be the submatrix of $H$ consisting the first $i$ rows of $H$. Using simple linear algebra, one can see that $K=\dim\ker H_0, \dim\ker H_1,\dots,\dim\ker H_{K'}$ is also a Markov-chain with transition matrix $P$. Thus, the statement follows.
\end{proof}

\subsection{Finishing the proof of Lemma~\ref{lemmadef}}

Finally, we are ready to provide a full proof of Lemma~\ref{lemmadef}. Consider $\mathcal{P}$ and $(v_p)_{p\in\mathcal{P}}$ defined in Section~\ref{SecFirstSteps}. Part \eqref{lemmadedpart1} of Lemma~\ref{lemmadef} follows from Lemma~\ref{banddgood}. Part \eqref{lemmadedpart4b} follows directly from the construction of $\mathcal{P}$. Part \eqref{lemmadedpart2a} and part \eqref{lemmadedpart4a} were proved in Lemma~\ref{lemmadef0ind} and Lemma~\ref{lemmafinitedeath}, respectively. Part~\eqref{lemmadedpart3a} is a direct consequence of part \eqref{lemmadedpart2a}. Part~\eqref{lemmadedpart3b} follows from part~\eqref{lemmadedpart4a}. Combining this with the fact that the vectors $(v_p)_{p\in\mathcal{P}}$ are linearly independent, we see that part~\eqref{lemmadedpart2b} is also true. In particular, the span of $(v_p)_{p\in\mathcal{P}}$ contains $Z_n$ for all $n$. Thus, $(v_p)_{p\in\mathcal{P}}$ spans~$\Finf$. Therefore, the vectors $(v_p)_{p\in\mathcal{P}}$ are not only linearly independent, but indeed form a basis of~$\Finf$. Part~\eqref{lemmadedpart3c} follows by combining part \eqref{lemmadedpart3a} and part \eqref{lemmadedpart3b}. Finally, to prove part~\eqref{lemmadedpart5}, observe that if $\mathcal{P}$ satisfies~\eqref{lemmadedpart3a}, then $(r,s)\in \mathcal{P}$ if and only if
\[\dim Z_r\cap B_s-\dim Z_{r-1}\cap B_s-\dim Z_r\cap B_{s-1}+\dim Z_{r-1}\cap B_{s-1}=1.\]

\section{The proof of Theorem~\ref{thmexplicit}}

The first part of the statement follows directly from Lemma~\ref{lemmadef}, so we move on to prove the second statement. Given $S$, let $A_S=(A_S(i,j))_{1\le i,j\le n}$ be the matrix determined by $S$ as in Lemma~\ref{lemmaBasicProp}. Note that by Lemma~\ref{lemmaBasicProp}, the event $\mathcal{P}\cap [1,n]^2=S$ is the same as the event that~$A(i,j)=A_S(i,j)$ for all $1\le i,j\le n$. Let $\mathcal{A}_j$ be the event that $A_S(i,j)=A(i,j)$ for all $1\le i\le n$. By definition $\mathcal{A}_0$ has probability one.

Analyzing the \textsf{NextLine} procedure we see that, if $b_j(S)\neq -\infty$, then
\[\mathbb{P}(\mathcal{A}_{j}\,|\,\mathcal{A}_{j-1})=\frac{q-1}{q} \left(\frac{1}q\right)^{j-b_j(S)-\left|\{b_j(S)<i<j\,:\,A_S(i,j-1)=1\}\right|}.\]

Moreover, if $b_j=-\infty$, then
\[\mathbb{P}(\mathcal{A}_{j}\,|\,\mathcal{A}_{j-1})= \left(\frac{1}q\right)^{j-\left|\{0<i<j\,:\,A_S(i,j-1)=1\}\right|}.\]

Summarizing the above two cases, we see that
\[\mathbb{P}(\mathcal{A}_{j}\,|\,\mathcal{A}_{j-1})=\left(\frac{q-1}{q}\right)^{\mathbbm{1}(b_j(S)\neq-\infty)} \left(\frac{1}q\right)^{j-\max(b_j(S),0)-\left|\{\max(b_j(S),0)<i<j\,:\,A_S(i,j-1)=1\}\right|}.\]

Multiplying these equations, we see that
\begin{equation}\label{expf0}\mathbb{P}(\mathcal{P}\cap [1,n]^2=S)=\left(\frac{q-1}{q}\right)^{|S|}q^{\Sigma(S)-{{n+1}\choose{2}}+\sum_{j=2}^n \left|\{\max(b_j(S),0)<i<j\,:\,A_S(i,j-1)=1\}\right|}.\end{equation}

By part~\eqref{partdofBasic} of Lemma~\ref{lemmaBasicProp}, we see that
\begin{align*}\sum_{j=2}^n & \left|\{\max(b_j(S),0)<i<j\,:\,A_S(i,j-1)=1\}\right|\\&=\sum_{j=2}^n |\{1\le h\le j-1\,:\,b_h(S)>b_j(S)\}|\\&=\inv(S).\end{align*}

Combining this with \eqref{expf0}, Theorem~\ref{thmexplicit} follows.

\section{Law of large numbers for the distributions of lifetimes -- The proof of Theorem~\ref{thmlifetime}}

Note that
\[\frac{|\{(b,d)\in \mathcal{P}\cap [1,n]^2\,:\,d-b=k\}|}{n}=\frac{|\{i\in \{1,2,\dots,n\}\,:\,d_i-i=k\}|}n+o(1).\]

Combining this with Lemma~\ref{lemmafinitedeath}, we see that
\begin{equation}\label{eqExpLifetime}
 \lim_{n\to\infty} \mathbb{E}\frac{|\{(b,d)\in \mathcal{P}\cap [1,n]^2\,:\,d-b=k\}|}{n}=\lim_{n\to\infty} \frac{1}n\sum_{i=1}^n \mathbb{P}(d_i-i=k)=\mathbb{P}(X=k).
\end{equation}

For $j> k$, let us define
\[f^{(j)}=\left(A(j-k,j),A(j-k+1,j),A(j-k+2,j),\dots,A(j,j)\right)\in \{0,1\}^{k+1}.\]

It follows from Lemma~\ref{lemmaNextLine} that $f^{(k+1)},f^{(k+2)},f^{(k+3)},\dots$ is a time-homogeneous irreducible Markov-chain. Moreover,
\begin{align*}&\frac{|\{(b,d)\in \mathcal{P}\cap [1,n]^2\,:\,d-b=k\}|}{n}\\&\qquad=\frac{|\{j\in \{k+1,k+2,\dots,n\}\,:\,j-b_j=k\}|}n\\&\qquad
=\frac{\mathbbm{1}(b_{k+1}=1)+|\{j\in \{k+2,k+3,\dots,n\}\,:\,f^{(j-1)}_2=0\text{ and }f^{(j)}_1=1\}|}n.
\end{align*}
Thus, it follows easily from the law of large numbers for Markov-chains  that 
\[\frac{|\{(b,d)\in \mathcal{P}\cap [1,n]^2\,:\,d-b=k\}|}{n}\text{ converges almost surely to a constant.}\]

By equation~\eqref{eqExpLifetime}, we see that this constant can only be $\mathbb{P}(X=k)$.

\section{Fluctuations of the persistent Betti numbers -- The proof of Theorem~\ref{thmfluc}}

The following well-known lemma can be found in \cite{fulman2015stein}.
\begin{lemma}\label{deterministicsizelimit}Let $u\in\mathbb{Z}$, and let $H_n$ be a uniform random $(n+u)\times n$ matrix over $\mathbb{F}_q$. 
\begin{enumerate}[(a)]
 \item If $u\ge 0$, then $\dim\ker H_n$ converges in distribution to $J_u$.
 \item If $u< 0$, then $\dim\ker H_n$ converges in distribution to $|u|+J_{|u|}$.
\end{enumerate}
\end{lemma}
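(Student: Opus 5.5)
We prove both parts through the Markov chain on coranks that already appeared in the proof of Lemma~\ref{perbettiasrandommatrices}. Let $H_n$ be a uniform random $(n+u)\times n$ matrix. We reveal its rows one at a time; transposing changes nothing, since rank is invariant under transposition. Let $H_n^{(k)}$ be the submatrix formed by the first $k$ rows. Then $\dim\ker H_n^{(0)}=n$, and the sequence $\dim\ker H_n^{(k)}$, $k=0,1,\dots,n+u$, is a Markov chain with transition matrix $P$ from Lemma~\ref{Markovlemma}. Indeed, if the current kernel has dimension $i$, a new uniform row kills the whole kernel exactly when it lies in the annihilator of the kernel. That annihilator has codimension $i$, so this happens with probability $q^{-i}$; otherwise the dimension drops by one. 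So $\dim\ker H_n$ has law $P^{n+u}(n,\cdot)$. The task reduces to the limit of this chain started at $n$ and run for $n+u$ steps.

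\textbf{Time reversal.} It is more convenient to track the \emph{rank} $R_k=n-\dim\ker H_n^{(k)}$, which moves from $R$ to $R+1$ with probability $1-q^{R-n}$. Writing $D_k=n-R_k$ for the corank deficit, this is a pure death chain in $D$ with holding probability $q^{-D}$. The deficit starts at $n$ and falls quickly while $D$ is large. The only randomness that survives in the limit comes from the last $O(1)$ levels of $D$.

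For $m\ge 0$, let $\tau_m$ be the number of steps the chain spends at level $D=m$; this is geometric with holding probability $q^{-m}$. For $m\ge1$ these are independent across levels, and level $0$ is absorbing. Then
\[
\mathbb{P}(\dim\ker H_n\ge k)=\mathbb{P}\Bigl(\sum_{m=k+1}^{n}\tau_m\ge n+u\Bigr)\quad\text{for }0\le k\le n,
\]
where each $\tau_m$ counts its first visit step plus holds, i.e.\ $\tau_m=1+\text{Geom}$ with mean $1+O(q^{-m})$. Hence
\[
\sum_{m=k+1}^{n}\tau_m=(n-k)+\sum_{m=k+1}^{n}(\tau_m-1).
\]
The excess $E_k=\sum_{m>k}(\tau_m-1)$ converges almost surely, as $n\to\infty$, to an a.s.\ finite variable $E_k^\infty$, because $\sum_m \mathbb{E}(\tau_m-1)<\infty$; dominated convergence handles the tails. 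So $\mathbb{P}(\dim\ker H_n\ge k)\to\mathbb{P}(E_k^\infty\ge k+u)$.

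\textbf{Identifying the limit.} We then identify this limit with the tail of $J_u$ for $u\ge0$, and with the tail of $|u|+J_{|u|}$ for $u<0$. For $u<0$ note that $E_k^\infty\ge k+u$ holds trivially for $k\le|u|$, which produces the deterministic shift $|u|$.

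This identification is the main obstacle. The cleanest route avoids computing the sum of geometrics directly. Instead we compare with the known exact formula for the number of $(n+u)\times n$ matrices of a given rank over $\mathbb{F}_q$, the standard Gaussian-binomial count. Taking $n\to\infty$ in that formula gives
\[
q^{-k(k+u)}\prod_{i=1}^k(1-q^{-i})^{-1}\prod_{i=1}^{k+u}(1-q^{-i})^{-1}\prod_{i\ge1}(1-q^{-i}).
\]
This is exactly $\mathbb{P}(J_u=k)$ for $u\ge0$, and after reindexing it is $\mathbb{P}(J_{|u|}=k-|u|)$ for $u<0$, by the symmetry of rank under transposition.

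In fact, given the exact count, the chain argument above is only needed to justify tightness. The convergence of the explicit probabilities for each fixed $k$ is routine via $\prod_{i=1}^{n}(1-q^{-i})\to\prod_{i\ge1}(1-q^{-i})$. The limits sum to $1$, which is the standard normalization of the Cohen--Lenstra-type measure. Together with pointwise convergence, this gives convergence in distribution by Scheffé's lemma.
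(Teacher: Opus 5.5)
Your argument is correct, but the paper does not prove this lemma at all. It quotes it from \cite{fulman2015stein}, where it is obtained by Stein's method together with explicit, exponentially small total-variation error bounds. You take the classical elementary route instead. The Gaussian-binomial count gives, whenever $k+u\ge 0$,
\[
\mathbb{P}(\dim\ker H_n=k)=q^{-k(k+u)}\,\frac{\prod_{j=k+1}^{n}(1-q^{-j})\,\prod_{j=k+u+1}^{n+u}(1-q^{-j})}{\prod_{j=1}^{n-k}(1-q^{-j})}.
\]
This tends to $\mathbb{P}(J_u=k)$ for $u\ge 0$, and to $\mathbb{P}(J_{|u|}=k-|u|)$ for $u<0$. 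The limits sum to $1$ by the Durfee-rectangle identity
\[
\sum_{k\ge 0}q^{-k(k+u)}\prod_{i=1}^{k}(1-q^{-i})^{-1}\prod_{i=1}^{k+u}(1-q^{-i})^{-1}=\prod_{i\ge 1}(1-q^{-i})^{-1}.
\]
So pointwise convergence of the mass functions already gives convergence in distribution, indeed in total variation. Your route is self-contained and is all that is used downstream, since Lemma~\ref{randomnondegenerate} only needs the qualitative limit for each fixed $u$. What you give up relative to the cited source is the convergence rate.

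Your preliminary Markov-chain paragraph has an index slip. The chain first reaches level $k-1$ at time $\sum_{m=k}^{n}\tau_m$, so the identity should read
\[
\mathbb{P}(\dim\ker H_n\ge k)=\mathbb{P}\Bigl(\sum_{m=k}^{n}\tau_m\ge n+u+1\Bigr).
\]
Your version, with the sum starting at $k+1$ and threshold $n+u$, already fails for $n=1$, $u=0$, $k=1$, where it gives $0$ instead of $q^{-1}$. Also, no actual time reversal takes place: $D_k$ is just $\dim\ker H_n^{(k)}$ again.

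The slip is harmless, because that paragraph only serves tightness, and tightness survives the correction. It also follows directly from the first-moment bound
\[
\mathbb{P}(\dim\ker H_n\ge k)\le\binom{n}{k}_q\,q^{-k(n+u)}\le C_q\,q^{-k(k+u)},\qquad C_q=\prod_{i\ge1}(1-q^{-i})^{-1}.
\]
Tightness is not needed at all once you invoke the normalization identity above.
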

\begin{lemma}\label{randomnondegenerate}
 Let $c_n$ be a sequence of positive integers converging to $\infty$. Let $K_n$ and $K_n'$ be independent $\mathbb{Z}_+$-valued random variables. Assume that there are two independent $\mathbb{Z}$-valued random variables $K_\infty$ and $K_\infty'$ such that $K_n-c_n$ converges in distribution to $K_\infty$, and $K_n'-c_n$ converges in distribution to $K_\infty'$. Let $H_n$ be a uniform random $K_n'\times K_n$ matrix over~$\mathbb{F}_q$. Then $\dim\ker H_n$ converges in distribution to
\[\max(0,-(K_\infty'-K_\infty))+J_{|K_\infty'-K_\infty|},\]
 where $J_0,J_1,\dots$ are independent from $(K_\infty,K_\infty')$. 
\end{lemma}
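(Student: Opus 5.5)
The plan is to condition on $(K_n,K_n')$ and reduce to Lemma~\ref{deterministicsizelimit}. Given $K_n=a$ and $K_n'=b$, the matrix $H_n$ is a uniform random $b\times a$ matrix. Writing $b=a+u$ with $u=b-a$, Lemma~\ref{deterministicsizelimit} gives the limiting law of $\dim\ker H_n$ whenever $a\to\infty$ with $u$ fixed. For $u\ge 0$ the limit is $J_u$. For $u<0$ it is $|u|+J_{|u|}$. In both cases this equals $\max(0,-u)+J_{|u|}$.

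Set $U_n=K_n'-K_n$. By independence of $K_n$ and $K_n'$, the pair $(K_n-c_n,K_n'-c_n)$ converges jointly in distribution to $(K_\infty,K_\infty')$. Hence $U_n$ converges in distribution to $U_\infty=K_\infty'-K_\infty$, and the convergence is joint with $K_n-c_n\to K_\infty$. Fix $k\ge 0$ and $\varepsilon>0$. Since all variables are integer valued, tightness gives a finite set $F\subset\mathbb{Z}^2$ such that $\mathbb{P}((K_n-c_n,U_n)\in F)>1-\varepsilon$ for all large $n$. We then write
\[\mathbb{P}(\dim\ker H_n=k)=\sum_{(x,u)}\mathbb{P}(K_n-c_n=x,\,U_n=u)\,\mathbb{P}\bigl(\dim\ker H^{(c_n+x+u)\times(c_n+x)}=k\bigr)+O(\varepsilon),\]
where $H^{m\times l}$ denotes a uniform $m\times l$ matrix and the sum runs over $(x,u)\in F$.

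For each fixed $(x,u)\in F$, the column count $c_n+x$ tends to $\infty$ while the difference $u$ stays fixed. Lemma~\ref{deterministicsizelimit}, applied along the sequence $n'=c_n+x$, shows that the inner probability converges to $\mathbb{P}(\max(0,-u)+J_{|u|}=k)$. The sum is finite, so we may pass to the limit term by term, and the outer weights converge to $\mathbb{P}(K_\infty=x,\,U_\infty=u)$. Letting $\varepsilon\to0$ gives
\[\mathbb{P}(\dim\ker H_n=k)\to\sum_u \mathbb{P}(U_\infty=u)\,\mathbb{P}\bigl(\max(0,-u)+J_{|u|}=k\bigr).\]
This is exactly the law of $\max(0,-U_\infty)+J_{|U_\infty|}$ with the $J$'s independent of $(K_\infty,K_\infty')$. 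Only the distribution of $U_\infty$ enters, so $K_\infty$ can be marginalized out at the end.

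The only delicate point is uniformity: Lemma~\ref{deterministicsizelimit} is stated for a fixed $u$ and a deterministic size $n\to\infty$. The tightness truncation handles this, since it reduces the problem to finitely many pairs $(x,u)$, and for each of them the sizes are deterministic sequences tending to infinity. One should also check that $c_n+x\ge 1$ for large $n$, which is automatic because $c_n\to\infty$ and $x$ ranges over a finite set.
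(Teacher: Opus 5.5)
Your proposal is correct and takes the same route as the paper, which simply calls the lemma a straightforward corollary of Lemma~\ref{deterministicsizelimit}. Your conditioning on $(K_n,K_n')$, together with the tightness truncation to finitely many pairs $(x,u)$ (so each term has deterministic sizes tending to infinity), supplies exactly the details the paper leaves implicit.
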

\begin{proof}
 This is a straightforward corollary of Lemma~\ref{deterministicsizelimit}.
\end{proof}

\begin{lemma}\label{lemmadegenerate}
Let $K_n$ and $K_n'$ be $\mathbb{Z}_+$-valued random variables, and let $H_n$ be a uniform random $K_n'\times K_n$ matrix over $\mathbb{F}_q$.
\begin{enumerate}[(a)]
 \item \label{lemmadegeneratea}Assume that $K_n'-K_n$ converges to $\infty$ in probability. Then 
\[\lim_{n\to\infty}\mathbb{P}(\dim\ker H_n=0)=1.\] 
 \item \label{lemmadegeneratb} Assume that $K_n'-K_n$ converges to $-\infty$ in probability. Then 
\[\lim_{n\to\infty}\mathbb{P}(\dim\ker H_n=K_n-K_n')=1.\]
\end{enumerate}
\end{lemma}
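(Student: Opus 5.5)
The natural route is through the row-by-row Markov chain already used in the proof of Lemma~\ref{perbettiasrandommatrices}. We may take $K_n'=K_n+m$ with $m=K_n'-K_n$. Reveal the rows of $H_n$ one at a time. If $H^{(i)}$ denotes the first $i$ rows, then $k_i=\dim\ker H^{(i)}$ is a Markov chain started from $k_0=K_n$ with transition matrix $P$. From state $k\ge 1$ it stays put with probability $q^{-k}$ and drops by one with probability $1-q^{-k}$. Since we are working conditionally on $(K_n,K_n')$, it suffices to prove bounds that hold uniformly in the values of $K_n$ and $K_n'$.

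For part~\eqref{lemmadegeneratea}, I will use a union bound over the event that the kernel is nonzero. We have $\dim\ker H_n>0$ exactly when some nonzero $x\in\mathbb{F}_q^{K_n}$ satisfies $H_nx=0$. For a fixed nonzero $x$, the vector $H_nx$ is uniform on $\mathbb{F}_q^{K_n'}$, so $\mathbb{P}(H_nx=0)=q^{-K_n'}$. Summing over the fewer than $q^{K_n}$ nonzero vectors gives
\[\mathbb{P}(\dim\ker H_n>0\mid K_n,K_n')\le q^{K_n-K_n'}.\]
Taking expectations, $\mathbb{P}(\dim\ker H_n>0)\le \mathbb{P}(K_n'-K_n<M)+q^{-M}$ for every $M$. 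Letting $n\to\infty$ and then $M\to\infty$ finishes this part.

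For part~\eqref{lemmadegeneratb}, I will use the same union bound on the transpose. Rank-nullity gives $\dim\ker H_n=K_n-\operatorname{rank}H_n$, so the event $\dim\ker H_n=K_n-K_n'$ is exactly the event that $H_n$ has full row rank $K_n'$. This holds if and only if $H_n^T$ has trivial kernel. $H_n^T$ is a uniform $K_n\times K_n'$ matrix, so the bound from part~\eqref{lemmadegeneratea} with the roles swapped gives failure probability at most $q^{K_n'-K_n}$. This tends to $0$ in the same way. In the degenerate case $K_n'=0$ the statement is trivial.

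I do not expect a real obstacle. The only point needing care is passing from the conditional bound to the unconditional statement, which is the routine truncation at level $M$ described above. Alternatively, the chain description of $k_i$ gives the same estimate: the probability that the chain ever lingers while positive is at most $\sum_{k\ge1}q^{-k}\cdot(\text{expected visits})$. The union bound is cleaner, so I will use that.
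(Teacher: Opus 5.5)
Your proof is correct and is essentially the paper's argument: part (a) uses the first-moment (union) bound $\mathbb{P}(\dim\ker H_n>0\mid K_n,K_n')\le q^{K_n-K_n'}$, and your truncation at level $M$ does the job of the paper's step $\mathbb{E}\min(1,q^{K_n-K_n'})\to 0$. Part (b) follows by applying part (a) to $H_n^T$ exactly as the paper does, and the Markov-chain description in your opening paragraph is never used and can be dropped.
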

\begin{proof}
 First, we prove part (a). If $H$ is a uniform random $n\times m$ matrix, then by Markov's inequality, we have
\[\mathbb{P}(\dim\ker H>0)\le \min\left(1,\mathbb{E}|\ker H\setminus\{0\}|\right)= \min\left(1,(q^m-1)q^{-n}\right)\le \min\left(1,q^{m-n}\right).\]
 Thus,
\[\lim_{n\to\infty}\mathbb{P}(\dim\ker H_n>0)\le \min\left(1,\mathbb{E}|\ker H_n\setminus\{0\}|\right)= \lim_{n\to\infty}\mathbb{E}\min\left(1,q^{K_n-K_n'}\right)=0.\]
 
 Part (b) follows by applying part (a) to $H_n^T$. 
\end{proof}

The next lemma can be basically found in \cite{van2026rank}, some care is needed to transform the parameters, since \cite{van2026rank} considers strictly triangular matrices. 

\begin{lemma}\label{LemmaRVP}
 Let $n_1<n_2<\cdots$ be a sequence of positive integers such that the fractional parts $\{-\log_q(n_i)\}$ converge to $\zeta$ as $i\to\infty$. Then
\[\dim\ker L_{n_i}-\lfloor \log_q(n_i)+\zeta\rceil\]
 converges to $\mathcal{L}_{1,q^{-1},q^{-1-\zeta}}$ in distribution as $i\to\infty$.
\end{lemma}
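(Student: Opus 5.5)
The plan is to reduce the statement to Van Peski's theorem on strictly lower triangular matrices~\cite{van2026rank}. This needs three things: an exact identity between the coranks of the two models, stability of the rounded centering under a shift of the index by one, and a change of parameter that absorbs an additive constant into $\chi$.

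\textbf{Step 1 (exact identity).} Let $U_{n+1}$ be a uniform random strictly lower triangular $(n+1)\times(n+1)$ matrix over $\mathbb{F}_q$. Its first row is zero and its last column is zero. Deleting that row and that column leaves an $n\times n$ matrix that is lower triangular with i.i.d. uniform entries on and below the diagonal, i.e.\ a copy of $L_n$. Since deleting a zero row does not change the rank, and the zero column adds one to the kernel, we get
\[\dim\ker U_{n+1}\stackrel{d}{=}1+\dim\ker L_n.\]
As a cross-check, one can compare the Markov chain \eqref{eqNntransition} with the analogous chain for the coranks of $U_n$; they should differ exactly by this index and value shift.

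\textbf{Step 2 (apply Van Peski along $m_i=n_i+1$).} Since $\log_q(n_i+1)-\log_q(n_i)\to 0$, the fractional parts $\{-\log_q(m_i)\}$ also converge to $\zeta$. Van Peski's theorem then gives that $\dim\ker U_{m_i}$, minus an explicit centering of the form $\lfloor \log_q(m_i)+c\rceil$, converges to some $\mathcal{L}_{1,q^{-1},\chi'}$. Here $c$ and $\chi'$ are given explicitly in terms of $\zeta$ in his normalization. I expect this translation to be the main obstacle: one has to read off carefully how \cite{van2026rank} centers the corank and which $\chi'$ appears.

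\textbf{Step 3 (absorbing constants into $\chi$).} From the defining formula, $\mathbb{P}(\mathcal{L}_{1,q^{-1},\chi}=x)$ depends on $\chi$ and $x$ only through $\chi q^{-x}$. Hence
\[\mathcal{L}_{1,q^{-1},\chi}+k\stackrel{d}{=}\mathcal{L}_{1,q^{-1},q^{k}\chi}\quad\text{for every } k\in\mathbb{Z}.\]
So any integer discrepancy between the centerings is converted into a power-of-$q$ rescaling of $\chi$. This includes the $+1$ from Step 1 and any constant offset in Van Peski's centering. Matching the result with $\chi=q^{-1-\zeta}$ is then a bookkeeping check.

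\textbf{Step 4 (rounding is eventually stable).} Write $\log_q n_i=k_i+\theta_i$ with $k_i\in\mathbb{Z}$ and $\theta_i\in[0,1)$. The fractional part $\{-\log_q n_i\}$ equals $1-\theta_i$ when $\theta_i\neq0$ and $0$ when $\theta_i=0$. In either case, the hypothesis $\{-\log_q n_i\}\to\zeta$ implies that $\log_q n_i+\zeta$ tends to $\mathbb{Z}$ in distance. Therefore $\log_q n_i+\zeta$ stays away from the discontinuities of $\lfloor\cdot\rceil$, which lie at half-integers.

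It follows that, for all large $i$,
\[\lfloor \log_q(n_i+1)+\zeta\rceil=\lfloor\log_q(n_i)+\zeta\rceil.\]
Any deterministic $o(1)$ perturbation of the argument likewise leaves the rounded value unchanged. Combining Steps 1--4 yields the claimed convergence of $\dim\ker L_{n_i}-\lfloor\log_q(n_i)+\zeta\rceil$ to $\mathcal{L}_{1,q^{-1},q^{-1-\zeta}}$.
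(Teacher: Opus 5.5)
This is essentially the paper's argument. The paper proves the lemma only by citing Van Peski's result for strictly triangular matrices and noting that the parameters must be transformed. Your Steps 1, 3 and 4 supply exactly that transformation: the identity $\dim\ker U_{n+1}\stackrel{d}{=}1+\dim\ker L_n$, the rule $\mathcal{L}_{1,q^{-1},\chi}+k\stackrel{d}{=}\mathcal{L}_{1,q^{-1},q^{k}\chi}$, and the eventual stability of the rounding. As in the paper, the final value of $\chi$ rests on Van Peski's normalization.

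One small slip is in Step 2. The claim $\{-\log_q(n_i+1)\}\to\zeta$ fails when $\zeta=0$ and infinitely many $n_i$ are powers of $q$, since for those the fractional part tends to $1$. Your Step 3 shows the statements with $\zeta=0$ and $\zeta=1$ are equivalent, so splitting into two subsequences repairs this.
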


Let $K_n$ and $K_n'$ be independent random variables such that $K_n$ has the same distribution as $\dim\ker L_{r_n}$ and $K_n'$ has the same distribution as $\dim\ker L_{t_n}$. Let $H_n$ be uniform random $K_n'\times K_n$ matrix over $\mathbb{F}_q$. By Lemma~\ref{perbettiasrandommatrices}, we see that $\beta^{r_n,r_n+t_n}$ has the same distribution as $\dim\ker H_n$. By Lemma~\ref{LemmaRVP}, we see that
\begin{equation}\label{eqKnFluc}K_n-\lfloor \log_q(r_n)+\zeta\rceil
 \text{ converges to $\mathcal{L}$.}\end{equation}

\medskip
\textbf{The proof of part \eqref{fluc1} of Theorem~\ref{thmfluc}}

In this case, $K_n'\le t$ and $K_n$ converges to $\infty$ in probability by \eqref{eqKnFluc}, so $K_n'-K_n$ tends to~$-\infty$ in probability. Thus, part~\eqref{lemmadegeneratb} of Lemma~\ref{lemmadegenerate} can be applied to give that
\[\lim_{n\to\infty}\mathbb{P}(\dim\ker H_n=K_n-K_n')=1.\]
Thus, $\beta_{r_n,r_n+t}-\lfloor \log_q(r_n)+\zeta\rceil$ has the same limiting distribution as $K_n-\lfloor \log_q(r_n)+\zeta\rceil-K_n'$, which has the same limiting distribution as $\mathcal{L}-D$.

\medskip
\textbf{The proof of part \eqref{fluc2} of Theorem~\ref{thmfluc}}

By Lemma~\ref{LemmaRVP}, we see that
\[K_n'-\lfloor \log_q(t_n)+\zeta'\rceil
 \text{ converges to $\mathcal{L}'$.}\]

Combining this with \eqref{eqKnFluc}, we see that the sequence $K_n'-K_n-(\log_q(t_n)-\log_q(r_n))$ is tight. Since $\log_q(t_n)-\log_q(r_n)\to-\infty$, it follows that $K_n'-K_n$ converges to $-\infty$
 in probability. As in the previous case, this implies that $\beta^{r_n,r_n+t_n}-\left(\lfloor \log_q(r_n)+\zeta\rceil-\lfloor \log_q(t_n)+\zeta'\rceil\right)$ has the same limiting distribution as $(K_n-\left(\lfloor \log_q(r_n)+\zeta\rceil)-(K_n'-\lfloor \log_q(t_n)+\zeta'\rceil\right)$, which has the same limiting distribution as $\mathcal{L}-\mathcal{L}'.$

\medskip
\textbf{The proof of part \eqref{fluc3} of Theorem~\ref{thmfluc}}

Under the assumption $\log_q(t_n)-\log_q(r_n)\to\gamma$, we have that $\{-\log_q(t_n)\}$ converges to~$\zeta'$. Thus, by Lemma~\ref{LemmaRVP}, we see that
\[K_n'-\lfloor \log_q(t_n)+\zeta'\rceil
 \text{ converges to $\mathcal{L}'$.}\]
Observing that $\gamma+\zeta'-\zeta\in\mathbb{Z}$, we see that  for large enough $n$, we have
\begin{align*}\lfloor \log_q(t_n)+\zeta'\rceil&=\lfloor \log_q(r_n)+\zeta +(\log_q(t_n)-\log_q(r_n)-\gamma)+(\gamma+\zeta'-\zeta)\rceil\\&=\lfloor \log_q(r_n)+\zeta\rceil+\gamma+\zeta'-\zeta.\end{align*}
Thus,
\[K_n'-\lfloor \log_q(r_n)+\zeta\rceil
 \text{ converges to $\mathcal{L}'+\gamma+\zeta'-\zeta$.}\]
Thus, with the choice of $c_n=\lfloor \log_q(r_n)+\zeta\rceil$ one can apply Lemma~\ref{randomnondegenerate} to obtain the statement.

\medskip
\textbf{The proof of part \eqref{fluc4} of Theorem~\ref{thmfluc}}

By Lemma~\ref{LemmaRVP}, we see that $K_n-\log_q(r_n)$ and $K_n'-\log_q(t_n)$ are both tight sequences. Since $\log_q(t_n)-\log_q(r_n)\to\infty$, it follows that $K_n'-K_n$ converges to $\infty$ in probability. 

Thus, by part~\eqref{lemmadegeneratea} of Lemma~\ref{lemmadegenerate}, we see that
\[\lim_{n\to\infty}\mathbb{P}(\dim\ker H_n=0)=1.\]
Therefore, the statement follows.

\bigskip

\textbf{Open question:} It would be interesting to find the joint distribution of fluctuations of a finite collection of persistent Betti numbers.

\bibliography{references}

\bibliographystyle{plain}

\bigskip

\noindent Andr\'as M\'esz\'aros, \\
HUN-REN Alfr\'ed R\'enyi Institute of Mathematics, \\Budapest, Hungary,\\ {\tt meszaros@renyi.hu}

\end{document}